\newtheorem{lem}{Lemma}
\newtheorem*{theorem}{Theorem}
\newtheorem{defi}{Definition}
\newcommand{\prk}{\operatorname{prk}}
\begin{document}

\begin{center}
{\bf Characterization of groups with non-simple socle}

{\bf I. B. Gorshkov}
\medskip
\footnote{The work was supported by
RFBR 18-31-00257.}

\end{center}
{\it Abstract: The spectrum of a finite group is a set of its element orders. We prove that if $m>5$ then the group $L_{2^m}(2)\times L_{2^m}(2)\times L_{2^m}(2)$ is uniquely determined by its spectrum in the class of finite groups.

\smallskip

Keywords: finite group, spectrum. \smallskip
}

\section*{Introduction}
Let $\omega(G)$ be the spectrum of group $G$. A finite group $L$ is recognizable by spectrum if every finite group $G$ with $\omega(G)=\omega(L)$ is isomorphic to $L$. W.J. Shi showed \cite{Shi} that if a finite group has a non-trivial soluble radical, then there are an infinite number of non-isomorphic finite groups with the same spectrum. Thus, the socle of a recognizable group is a direct product of nonabelian simple groups. The question on recognizability of a finite nonabelian simple groups is solved for many groups \cite{VasBig}. If the socle of $G$ is a finite simple group but $G$ is not simple, then $G$ can be recognizable. For example, in \cite{GorSym, GorGri} it is proved that symmetric groups of dimension greater than 10 are recognizable. If the socle of $G$ is not a simple group, then two recognizable groups are known, these are $J_4\times J_4$ see \cite{GorMas}, and $Sz(2^7)\times Sz(2^7)$ see \cite{Maz97}. In the present paper we prove that the direct product of three groups $L_{2^m}(2)$, where $m>5$, is recognizable.

\begin{theorem}
The groups $L_{2^m}(2)\times L_{2^m}(2)\times L_{2^m}(2)$, where $m>5$ are recognizable.
\end{theorem}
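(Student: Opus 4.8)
\medskip

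\noindent\textit{Proof sketch.} Write $L=L_{2^m}(2)$; since $\gcd(2^m,1)=1$ we have $L=\mathrm{PSL}_{2^m}(2)=\mathrm{GL}_{2^m}(2)$, $\Out(L)=\langle\gamma\rangle\cong C_2$ with $\gamma$ the inverse-transpose automorphism (there are no field or diagonal automorphisms), and $\Out(L^3)\cong C_2\wr S_3$, of order $48$. Set $\bar L=L\times L\times L$ and let $G$ be a finite group with $\omega(G)=\omega(\bar L)$; we must show $G\cong\bar L$. Every element of $\omega(\bar L)$ is a least common multiple of at most three element orders of $L$, so the largest power of $2$ in $\omega(\bar L)$ is still $2^m$ (whence $2^{m+1}\notin\omega(\bar L)$), and $3^{a+1}\notin\omega(\bar L)$ where $3^{a}$ is the largest power of $3$ in $\omega(L)$. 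The key positive fact is that $\omega(\bar L)$ contains numbers $M=\operatorname{lcm}(a_1,a_2,a_3)$ with $a_i$ pairwise coprime, $a_1=2^{2^m}-1$ (a Singer element) and $a_2,a_3$ of the form $(2^{u}-1)(2^{v}-1)$ with $u,v$ odd, coprime and $u+v=2^m$, for two independent choices of $\{u,v\}$; a Zsigmondy-prime computation shows that inside a single linear group over $\mathbb F_2$ none of $\{a_1,a_2\},\{a_1,a_3\},\{a_2,a_3\}$ is the order of one element, so a realization of $M$ requires three ``full'' $2^m$-dimensional spaces, and consequently $2M\notin\omega(\bar L)$.

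I would first show $\mathrm{Rad}(G)=1$. If $V\leq\mathrm{Rad}(G)$ is a minimal normal $p$-subgroup, choose $M$ as above with $p\nmid M$ and with $\mathrm{ord}_p(2)$ dividing none of the block lengths $2^m,u,v,u',v'$; a Schur--Zassenhaus lift of an order-$M$ element of $G/\mathrm{Rad}(G)$ either fixes a non-zero vector of $V$, producing an element of order $pM\notin\omega(\bar L)$, or acts fixed-point-freely --- ruling out the latter, via coprime-action arguments and the bound $2^{m+1}\notin\omega(\bar L)$, is the technical heart of this step. Then $\mathrm{Soc}(G)=S_1\times\dots\times S_k$ with the $S_i$ non-abelian simple and $\mathrm{Soc}(G)\leq G\leq\Aut(\mathrm{Soc}(G))$; moreover $\pi(G)=\pi(\bar L)$ contains Zsigmondy primes $r$ with $\mathrm{ord}_r(2)\in\{2^m,\,2^{m-1}\pm1,\dots\}$, which cannot divide the small group $\Out(\mathrm{Soc}(G))$ and hence divide the $|S_i|$.

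The heart of the argument is $\mathrm{Soc}(G)\cong L^3$. Taking a suitable power of an order-$M$ element of $G$ lands inside $\mathrm{Soc}(G)$ with an element whose order still carries the large Zsigmondy primes of $a_1,a_2,a_3$; invoking the classification of simple groups possessing prescribed Zsigmondy-prime elements, together with $2^{m+1}\notin\omega(\bar L)$ (which forces each relevant $S_i$ to be of Lie type over $\mathbb F_2$ of dimension $\leq 2^m$), one concludes that three of the $S_i$ are each isomorphic to $L_{2^m}(2)$ --- competitors such as $L_{2^m-1}(2)$, $\mathrm{PSU}_{2^m}(2)$ or $\mathrm{Sp}_{2^m}(2)$ are excluded because one copy of any of them cannot absorb a pair $\{a_i\}$ (indeed some of them have primes outside $\pi(\bar L)$). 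Since $a_1,a_2,a_3$ must be spread over three distinct factors, $k\geq 3$; and any additional factor $S$ would have $2\in\omega(S)$ and hence force $2M\in\omega(\mathrm{Soc}(G))\setminus\omega(\bar L)$, so $k=3$ and $\mathrm{Soc}(G)\cong L^3$.

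It remains to kill $G/\mathrm{Soc}(G)\hookrightarrow C_2\wr S_3$. A $3$-cycle there yields (from an element inducing it and acting as a maximal $3$-element on one factor) an element of order $3^{a+1}\notin\omega(\bar L)$; a transposition yields (via a maximal unipotent on one factor) an element of order $2^{m+1}\notin\omega(\bar L)$; and an element inducing $\gamma$ on some factor does the same, since $\omega(L.\langle\gamma\rangle)$ contains a power of $2$ exceeding $2^m$. Hence $G=\mathrm{Soc}(G)\cong L^3$. The step I expect to be hardest is the identification $\mathrm{Soc}(G)\cong L^3$ --- it combines a classification-type input pinning every relevant factor to $L_{2^m}(2)$ (rather than to a spectrally very close group) with the delicate arithmetic forcing simultaneously ``at least three'' and ``at most three'' factors; the reduction $\mathrm{Rad}(G)=1$ is a close second, since the complete prime graph of $\bar L$ makes the usual Gruenberg--Kegel methods inapplicable and one must instead exploit the large pairwise-coprime element orders of $\bar L$ directly.
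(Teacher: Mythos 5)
Your overall architecture --- first kill $\mathrm{Rad}(G)$, then pin down $\mathrm{Soc}(G)$, then kill $G/\mathrm{Soc}(G)$ --- is the standard one, and it differs from the paper's, which never proves $\mathrm{Rad}(G)=1$ up front: the paper runs a chief series of $G$, uses Keller's bound $\gamma(3)=8$ together with the independent set $\Omega$ of $2^{m-1}$ Zsigmondy primes to show that some quotient $G/H$ has socle $A_1\times A_2\times A_3$ with each $A_i\simeq L_{2^m}(2)$, and only at the very end shows $H=1$. Your use of large pairwise coprime element orders $M=a_1a_2a_3$ is sound where you make it precise (e.g.\ the dimension count showing $2M\notin\omega(\bar L)$, which forces $k=3$, is correct), and your treatment of $G/\mathrm{Soc}(G)\hookrightarrow C_2\wr S_3$ matches the paper's last two lemmas. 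But the proposal has genuine gaps. The most serious is the step $\mathrm{Rad}(G)=1$: you reduce to ``either $C_V(x)\neq 0$, giving $pM\in\omega(G)$, or $\langle x\rangle$ acts fixed-point-freely,'' and then defer the fixed-point-free case as ``the technical heart.'' That case is exactly where the work is, and a cyclic group of huge composite order can perfectly well act fixed-point-freely on a module; ruling it out requires embedding the element into a non-cyclic subgroup with controlled action (the paper uses Frobenius subgroups of $L_n(2)$, parabolic subgroups, and the Hall--Higman-type Lemma \ref{fact} on minimal polynomials). Those subgroups only become available \emph{after} the simple factors above the radical have been identified, which is why the paper postpones killing $H$ to the end; in your ordering the step cannot be carried out with the tools you name.

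There is also a concrete arithmetic obstruction in your recipe for $M$: you require $\mathrm{ord}_p(2)$ to divide none of the block lengths $2^m,u,v,u',v'$, but for $p=3$ (and any prime $p$ with $e(p,2)$ a power of $2$, e.g.\ $5,17,257$) we have $e(p,2)\mid 2^m$, hence $p\mid 2^{2^m}-1=a_1$ unavoidably; the condition $p\nmid M$ is then unsatisfiable with $a_1$ in the product, and one must switch to decompositions of $2^m$ and $2^m-1$ into parts avoiding $e(p,2)$, exactly as in the paper's final lemma. Finally, the identification of the simple factors is delegated to an unspecified ``classification of simple groups possessing prescribed Zsigmondy-prime elements''; the exclusion of alternating, sporadic, exceptional, odd-characteristic classical groups and of the even-characteristic competitors $U_t(2^k)$, $S_{2t}(2^k)$, $O^{\pm}_{2t}(2^k)$, $L_t(2^k)$ with $(t,k)\neq(2^m,1)$ is the bulk of the paper (Lemmas \ref{good}--\ref{suma} and the lemma following them) and is not actually performed in your sketch. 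So the proposal is a plausible outline with a genuinely different organization, but as written it omits, and in the radical step mis-orders, the arguments that carry the proof.
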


\section{Preliminary results}

In this paper, all groups are finite. Denote $G^n=\underbrace{G\times...\times G}_n$.

\begin{defi}
Let $H$ be a finite group, $n$ be a positive integer.
\begin{enumerate}
\item{$\sigma(n)=$ number of different primes dividing $n$,}
\item{$\sigma(H)=max\{\sigma(|g|)| g\in H\}$,}
\item{$\gamma(n)=max\{\sigma(|T|)| T$ is a solvable group with $\sigma(T)=n\}$,}
\item{$\pi(H)=\sigma(|H|)=$ set of prime divisors $|H|$,}
\end{enumerate}
\end{defi}

\begin{lem}\cite[Theorem 3.3]{Keller94}\label{sigmas}
$\gamma(3)=8$.
\end{lem}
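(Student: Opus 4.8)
The plan is to translate the statement into a purely graph-theoretic fact about the prime (Gruenberg--Kegel) graph of a solvable group and then apply Ramsey's theorem. For a finite group $T$ let $\Gamma(T)$ be the graph whose vertices are the primes in $\pi(T)$, two primes $p,q$ being joined when $T$ has an element of order $pq$. The quantity $\sigma(T)$ is then at most the clique number of $\Gamma(T)$: if $g$ has order divisible by the distinct primes $p_1,\dots,p_k$, suitable powers of $g$ have orders $p_ip_j$ for all $i\neq j$, so $\{p_1,\dots,p_k\}$ is a clique. Since $|\pi(T)|$ is exactly the number of vertices of $\Gamma(T)$, the assertion $\gamma(3)=8$ becomes: among solvable groups with $\sigma$-value $3$, the graph $\Gamma(T)$ has at most $8$ vertices, and $8$ is attained.

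For the upper bound $\gamma(3)\le 8$ I would use two structural facts about prime graphs of solvable groups. First, for solvable $T$ a clique of $\Gamma(T)$ is realized by a single element: if primes $p_1,\dots,p_k$ are pairwise joined, then $T$ has an element of order divisible by $p_1\cdots p_k$, so that the clique number of $\Gamma(T)$ equals $\sigma(T)$. This is proved by induction along a chief series, using the existence and conjugacy of Hall subgroups in solvable groups. Second, $\Gamma(T)$ contains no coclique of size $3$, i.e. among any three primes dividing $|T|$ two are joined; this follows from the Gruenberg--Kegel/Lucido description of prime graphs of solvable groups (at most two connected components, with the Frobenius and $2$-Frobenius structure forcing the relevant prime sets to be unions of two cliques). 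Granting these, suppose $T$ is solvable with $\sigma(T)=3$ and $|\pi(T)|\ge 9$. Since $R(4,3)=9$, any graph on $9$ vertices has either a clique of size $4$ or a coclique of size $3$; the coclique is excluded by the second fact, and a clique of size $4$ would, by the first fact, produce an element whose order involves $4$ primes, contradicting $\sigma(T)=3$. Hence $|\pi(T)|\le 8$.

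For the lower bound I would exhibit a solvable group $T$ with $\sigma(T)=3$ and $|\pi(T)|=8$ whose prime graph is the (essentially unique) extremal Ramsey graph on $8$ vertices having clique number $3$ and no coclique of size $3$. The natural way to realize a prescribed pattern of non-edges is to stack Frobenius and $2$-Frobenius configurations: one chooses $8$ primes and builds $T$ from iterated semidirect products $V\rtimes A$ in which complements act fixed-point-freely, so that a pair of primes fails to be joined exactly when one lies in a kernel and the other in the corresponding complement. One then checks directly that every element order involves at most three primes and that every three-clique demanded by the target graph is actually realized by an element.

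The main obstacle is twofold. The genuinely group-theoretic content sits in the first structural fact (a clique yields a single element of the combined order), which is where solvability is used in an essential way and which fails for general finite groups; and in the lower bound the difficulty is not the abstract graph but producing an honest solvable group whose Frobenius and $2$-Frobenius layers realize precisely the extremal graph while keeping $\sigma=3$. The coincidence of the bound with $R(4,3)-1=8$ is what pins down the exact value and strongly suggests that the Ramsey-theoretic packaging above is the intended route.
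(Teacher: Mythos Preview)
The paper does not prove this lemma; it is simply quoted from Keller's 1994 paper, so there is no in-house argument to compare against. Your sketch, however, has a genuine gap in what you call the ``first structural fact'': it is \emph{not} true that in a solvable group every clique of the prime graph is realized by the order of a single element.

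A small counterexample: take distinct primes $p,q,r$, let $V_1$ be a nontrivial elementary abelian $p$-group on which $C_r$ acts fixed-point-freely and $C_q$ acts trivially, and let $V_2$ be a nontrivial elementary abelian $p$-group on which $C_q$ acts fixed-point-freely and $C_r$ acts trivially. Set
\[
T=(V_1\times V_2)\rtimes(C_q\times C_r).
\]
This group is metabelian, hence solvable. One checks directly that $\omega(T)=\{1,p,q,r,pq,pr,qr\}$: an element of the form $v\cdot a^ib^j$ with $i\not\equiv 0\pmod q$ and $j\not\equiv 0\pmod r$ has $(v\cdot a^ib^j)^{qr}=\sum_{x\in C_q\times C_r}x\cdot v=0$, because on each summand $V_k$ the factor acting nontrivially does so fixed-point-freely, killing the norm. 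Thus $\Gamma(T)$ is the triangle on $\{p,q,r\}$ while $\sigma(T)=2$. The same idea scales to produce a $4$-clique in the prime graph of a solvable group with $\sigma=3$, so step~4 of your Ramsey argument (``a $K_4$ in $\Gamma(T)$ forces an element whose order has four prime divisors'') does not go through.

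This is not a minor technicality: the whole force of Keller's theorem is precisely that one must control $|\pi(T)|$ without being able to pass from cliques in $\Gamma(T)$ to single elements. Keller's original proof does not run via the Ramsey number $R(4,3)$; it is a substantially longer structural analysis of solvable groups (Fitting series, actions of Hall subgroups on chief factors, Hall--Higman type centralizer arguments) that bounds $|\pi(T)|$ directly. The coincidence $\gamma(3)=R(4,3)-1=8$ is suggestive but, as the example above shows, is not explained by the Ramsey heuristic you propose. Your lower-bound construction idea (realizing an extremal graph by stacked Frobenius/$2$-Frobenius layers) is in the right spirit, but the upper bound needs a different engine.
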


\begin{lem}\cite{BuA}\label{Spectrum}
Let $G =L^{\varepsilon}_{n}(q)$, where $n\geq2$, $q$ is a power of $p$. Put $d=(n, q-\varepsilon1)$. Then $\omega(G)$ consists from the devisers of
\begin{enumerate}
\item{$\frac{q^n-(\varepsilon1)^n}{d(q-\varepsilon1)}$;}
\item{$\frac{[q^{n_1}-(\varepsilon1)^{n_1}, q^{n_2}-(\varepsilon1)^{n_2}]}{(n/(n_1,n_2),q-\varepsilon1)}$ where $n_1, n_2 >0$ are such that $n_1+n_2=n$;}
\item{$[q^{n_1}-(\varepsilon1)^{n_1}, q^{n_2}-(\varepsilon1)^{n_2}, . . . , q^{n_s}-(\varepsilon1)^{n_s}]$ where $s > 3$ and $n_1, n_2, . . . , n_s>0$ are such that $n_1 + n_2 + . . . + n_s = n$;}
\item{ $p^{k \frac{q^{n_1}-(\varepsilon1)^{n_1}}{d}}$ where $k, n_1 > 0$ are such that $p^{k-1} + 1 + {n_1}=n$;}
\item{$p^k[q^{n_1}-(\varepsilon1)n_1, q^{n_2}-(\varepsilon1)^{n_2}, . . . , q^{n_s}-(\varepsilon1)^{n_s}]$ where $s> 2$ and $k, n_1, n_2 . . . , n_s > 0$ are such that $p^{k-1} + 1 + n_1 + n_2 + . . . + n_s = n$;}
6) $p^k$, where $p^{k-1} + 1 = n, k > 0$.
\end{enumerate}
\end{lem}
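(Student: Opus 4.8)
The plan is to compute $\omega(L_n^\varepsilon(q))$ by pulling the problem back through the inclusion $SL_n^\varepsilon(q)\le GL_n^\varepsilon(q)$ and the quotient $L_n^\varepsilon(q)=SL_n^\varepsilon(q)/Z$, where $Z$ is the group of scalars in $SL_n^\varepsilon(q)$ of order $d=(n,q-\varepsilon1)$, and then applying the Jordan decomposition. First I would reduce to $GL_n^\varepsilon(q)$: for $g\in SL_n^\varepsilon(q)$ with image $\bar g$ in $L_n^\varepsilon(q)$, the order $|\bar g|$ is the least $t$ with $g^t\in Z$, so I must determine all element orders in $SL_n^\varepsilon(q)$ and then track how much the central quotient can shrink each one. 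Element orders in $SL_n^\varepsilon(q)$ are in turn read off from those of $GL_n^\varepsilon(q)$ by imposing determinant $1$; hence the whole computation rests on understanding $\omega(GL_n^\varepsilon(q))$ and then performing two successive quotients.

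For $g=su=us\in GL_n^\varepsilon(q)$ with $s$ semisimple and $u$ unipotent, one has $|g|=|s|\cdot|u|$ since $|s|$ is coprime to $p$ and $|u|$ is a $p$-power. The semisimple part is governed by the cyclic structure of maximal tori: up to conjugacy $s$ lies in a torus indexed by a partition $n=n_1+\cdots+n_s$, whose factor on the $i$-th block is a Singer-type cyclic group of order $q^{n_i}-(\varepsilon1)^{n_i}$, so $|s|$ divides $[q^{n_1}-(\varepsilon1)^{n_1},\ldots,q^{n_s}-(\varepsilon1)^{n_s}]$ and every such least common multiple is realized. This produces the semisimple skeleton of items (1)--(3). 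For the unipotent part, $u$ lies in the centralizer $C_{GL_n^\varepsilon(q)}(s)$, which is a direct product of smaller general linear and unitary groups over extension fields; a unipotent element whose largest Jordan block has size $j$ has order $p^{\lceil\log_p j\rceil}$, so order exactly $p^k$ forces $p^{k-1}<j$, and the most economical realization uses one block of size $j=p^{k-1}+1$. Counting this dimension against the dimensions absorbed by the semisimple Singer blocks gives the additive constraints $p^{k-1}+1+n_1+\cdots+n_s=n$ of items (4)--(6), with the factor $p^k$ multiplying the corresponding semisimple least common multiple.

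The delicate part is the passage from $GL_n^\varepsilon(q)$ down to $L_n^\varepsilon(q)$, which is precisely where the denominators $d(q-\varepsilon1)$, $(n/(n_1,n_2),q-\varepsilon1)$, and the absence of any denominator in (3) and (5) originate. On a torus indexed by $n=n_1+\cdots+n_s$ the determinant restricts to the product of the block norm maps $\mathbb{F}_{q^{n_i}}^{\times}\to\mathbb{F}_q^{\times}$ (the unitary norm when $\varepsilon=-1$), each surjective onto a group of order $q-\varepsilon1$; the condition $g\in SL_n^\varepsilon(q)$ imposes a single relation among the $s$ block components, and the further quotient by $Z$ (order $d$, sitting inside the torus) imposes a divisibility. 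Determining the largest surviving element order is therefore a gcd computation depending on $s$ and on $(n_1,\ldots,n_s)$, which I would carry out configuration by configuration: the tightest reduction $d(q-\varepsilon1)$ for a single Singer block (item 1), the factor $(n/(n_1,n_2),q-\varepsilon1)$ for two blocks (item 2), and no reduction at all once there are enough blocks for the torus to already meet every relevant coset of the determinant map and of $Z$ (items 3 and 5).

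The main obstacle I anticipate is exactly this bookkeeping: proving in each configuration that pushing the maximal order of $T\cap SL_n^\varepsilon(q)$ into $L_n^\varepsilon(q)$ removes precisely the stated gcd factor and no more, uniformly in $\varepsilon=\pm1$. This requires a careful description of the image of the simultaneous determinant map on the Singer factors and of how $Z$ intersects each torus, and it is here that replacing $q^{n_i}-1$ by $q^{n_i}-(\varepsilon1)^{n_i}$ and the ordinary norm by the unitary norm must be handled with care. By contrast the mixed cases (4)--(6) are easier on the arithmetic side, since $|u|=p^k$ is coprime both to $|Z|$ and to $q-\varepsilon1$, so the central quotient cannot affect the $p$-part; there one only verifies the fitting condition $p^{k-1}+1+n_1+\cdots+n_s=n$ guaranteeing that a unipotent block of order $p^k$ sits alongside the chosen semisimple Singer blocks inside dimension $n$, the determinant-one relation being absorbed by the extra freedom the semisimple eigenvalues on those blocks provide.
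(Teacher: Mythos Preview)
The paper does not prove this lemma at all; it is quoted as a known result from Buturlakin \cite{BuA} with only the citation given and no argument. Your sketch is essentially the route taken in that reference: Jordan decomposition $g=su$ in $GL_n^\varepsilon(q)$, maximal tori parametrized by partitions $n=n_1+\cdots+n_s$ to produce the semisimple lcm's, the unipotent order $|u|=p^{\lceil\log_p j\rceil}$ from the largest Jordan block size $j$ (so $j=p^{k-1}+1$ realizes order $p^k$ most economically), and then the gcd bookkeeping when intersecting each torus with $SL_n^\varepsilon(q)$ and passing to the quotient by the scalar subgroup of order $d$.

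One small slip in your outline: you say the mixed cases (4)--(6) carry no reduction on the semisimple part because the unipotent factor absorbs nothing from $Z$. That is right for the $p$-part, but item (4) still has the denominator $d$ on its semisimple factor $(q^{n_1}-(\varepsilon1)^{n_1})/d$. The reason is that in (4) there is only a \emph{single} semisimple block (of size $n_1$), so the determinant constraint and the central quotient bite exactly as in the single-block case (1); the ``extra freedom'' you invoke only appears once $s\ge2$ semisimple blocks are present, which is why (5) has no denominator but (4) does. With that correction your plan matches the cited source.
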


If $n$ is a nonzero integer and $r$ is an
odd prime with $(r, n) = 1$, then $e(r, n)$ denotes the multiplicative order of $n$ modulo $r$.
Given an odd integer $n$, we put $e(2, n)=1$ if $n\equiv 1(mod\ 4)$, and $e(2, n) = 2$ otherwise.
Fix an integer $a$ with $|a| > 1$. A prime $r$ is said to be a primitive prime divisor of
$a^i-1$ if $e(r, a)=i$. We write $r_i(a)$ to denote some primitive prime divisor of $a^i-1$, if such a prime exists, and $R_i(a)$ to denote the set of all such divisors.
Zsigmondy \cite{Zs} proved that primitive prime divisors exist for almost all pairs $(a,i)$.

\begin{lem}(Zsigmondy). Let $a$ be an integer and $|a| > 1$. For every natural number
$i$ there exists some primitive prime devisors $r_i(a)$, except for the pairs $(a,i)\in\{(2, 1), (2, 6), (-2, 2), (-2, 3),
(3, 1), (-3, 2)\}$.
\end{lem}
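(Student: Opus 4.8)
The statement is the classical theorem of Zsigmondy; the plan is to prove it via cyclotomic polynomials. First I would reduce to the case $a \geq 2$. Writing $b = |a|$ and using $a^i = (-1)^i b^i$, one relates a primitive prime divisor of $a^i - 1$ (a prime $r$ with $e(r,a) = i$) to primitive divisors of $b^i - 1$ and $b^{2i} - 1$, tracking the parity of $i$; under this correspondence the three exceptional pairs with $a < 0$ arise exactly from the positive ones. Thus it suffices to show that for $a \geq 2$ and $n \geq 1$ the number $a^n - 1$ has a prime $r$ with $e(r,a) = n$, except for $(a,n) \in \{(2,1),(2,6),(3,1)\}$, the last reflecting the convention $e(2,a)=2$ for $a \equiv 3 \pmod 4$.

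The engine is the factorization $a^n - 1 = \prod_{d \mid n} \Phi_d(a)$ into cyclotomic values. A prime $r \nmid a$ divides $\Phi_n(a)$ if and only if $n$ has the form $e(r,a)\cdot r^k$ for some $k \geq 0$; consequently $r$ is a primitive prime divisor of $a^n-1$ exactly when $r \mid \Phi_n(a)$ and $r \nmid n$. The key structural lemma I would establish is: if $r \mid \Phi_n(a)$ but $e(r,a) < n$ (the non-primitive case), then $r$ is the largest prime divisor of $n$ and $v_r(\Phi_n(a)) = 1$. For odd $r$ this follows from the lifting-the-exponent identity $v_r(a^n - 1) = v_r(a^{e(r,a)} - 1) + v_r(n)$, while the case $r = 2$ is handled separately. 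Granting this lemma, $\Phi_n(a)$ has no primitive prime divisor only if every prime factor of $\Phi_n(a)$ equals the largest prime $P \mid n$ and occurs to the first power, that is, $\Phi_n(a) \in \{1, P\}$.

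It then remains to show $\Phi_n(a) > P$, where $P \leq n$, outside a short list of small pairs. For this I would use the product-over-roots bound $\Phi_n(a) = \prod_{\zeta}(a - \zeta) \geq (a-1)^{\phi(n)}$, which already gives $\Phi_n(a) \geq 2^{\phi(n)} > n \geq P$ once $a \geq 3$ and $\phi(n)$ is not too small; the residual finitely many pairs with small $\phi(n)$ are checked by hand. The genuinely delicate range is $a = 2$, where the crude bound degenerates to $1$; here I would sharpen the estimate by pairing complex-conjugate factors, using $|2 - \zeta|^2 = 5 - 4\cos\theta$, to obtain a lower bound exceeding $n$ for all $n$ outside a finite set, and then verify the remaining small cases directly, pinning down $\Phi_1(2) = 1$ and $\Phi_6(2) = 3$ as the only genuine failures.

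The main obstacle is twofold: obtaining a size estimate on $\Phi_n(a)$ sharp enough to isolate precisely the listed exceptions rather than a larger finite set (especially the borderline base $a = 2$), and handling the prime $r = 2$ in the structural lemma, where the lifting-the-exponent formula takes a different shape and the convention $e(2,a) \in \{1,2\}$ must be reconciled with the divisibility of $\Phi_n(a)$. Once these are in place, tracking the reduction back to negative $a$ produces the three extra exceptional pairs $(-2,2),(-2,3),(-3,2)$, completing the list.
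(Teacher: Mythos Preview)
The paper does not prove this lemma at all: it is stated as the classical Zsigmondy theorem with a reference to \cite{Zs}, and no argument is given. So there is no ``paper's proof'' to compare against.

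Your sketch via cyclotomic polynomials is the standard modern proof of Zsigmondy's theorem and is correct in outline. The reduction from negative to positive base, the factorisation $a^n-1=\prod_{d\mid n}\Phi_d(a)$, the structural lemma that a non-primitive prime $r$ dividing $\Phi_n(a)$ must be the largest prime factor of $n$ with $v_r(\Phi_n(a))=1$ (via lifting-the-exponent for odd $r$, with $r=2$ treated separately), and the size estimate $\Phi_n(a)\geq(a-1)^{\phi(n)}$ sharpened at $a=2$ by pairing conjugate roots --- all of this is exactly how the theorem is proved in modern expositions. You have also correctly noticed that the exception $(3,1)$ on the paper's list is an artefact of the convention $e(2,a)=2$ for $a\equiv 3\pmod 4$ adopted just above the lemma, and that under this same convention the usual family of exceptions $(2^k-1,2)$ disappears. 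In short, your proposal supplies a proof where the paper supplies only a citation; the approach is sound and there is nothing further to compare.
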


Let $\Theta$ be a set of primes. We say that $\Theta$ is an independent set of a group $H$ if $\Theta\subseteq \pi(H)$ and for any distinct $p,q\in \Theta$ we have $pq\not\in\omega(H)$. Denote by $\rho(G)$ some maximal independent set of $G$ and $t(G)=|\rho(G)|$.
We call the number $t(G)$ the independence number of the group $G$.

\begin{lem}\cite[Lemma 2.2]{VasBig}\label{L8} Let $L$ be a simple classical group over a field of order $q$ and characteristic $p$. Suppose that $r$ and $s$ are distinct primes, $r, s \not\in\pi(q^2-1)$, $r\in R_i(q)$, and $s\in R_j(q)$.

(i)If $rs \in \omega(L)$, then $r's'\in\omega(L)$ for every distinct odd primes $r'\in R_i(q)$ and $s'\in R_j(q)$.

(ii)If $pr\in(L)$, then $pr'\in\omega(L)$ for every odd prime $r'\in R_i(q)$.
\end{lem}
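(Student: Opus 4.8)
The plan is to reduce both parts of the lemma to a single uniform-divisibility principle. For any odd prime $r$ with $r\nmid q$ one has $r\mid q^m-1 \iff e(r,q)\mid m$; hence if $r,r'\in R_i(q)$ then $r\mid q^m-1 \iff i\mid m \iff r'\mid q^m-1$. In other words, all primitive prime divisors in $R_i(q)$ divide the cyclotomic value $\Phi_i(q)$ and divide a given factor $q^m-1$ under exactly the same condition $i\mid m$, so the whole set $R_i(q)$ moves together through the factors that build up element orders. The hypothesis $r,s\notin\pi(q^2-1)$ gives $i=e(r,q)\geq 3$ and $j=e(s,q)\geq 3$; this is the essential regime, since then $r,s$ are coprime to the index $d=(n,q-\varepsilon 1)$ and to the diagonal and central contributions, so that divisibility of an element order of the simple group $L$ by $r$ or $s$ is read off directly from the torus data, unaffected by passage to the central quotient. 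I would then invoke the description of $\omega(L)$ for classical groups --- Lemma~\ref{Spectrum} for the linear and unitary families and the analogous Buturlakin formulas for the symplectic and orthogonal ones --- under which every element order divides an expression $p^k\cdot[\,q^{m_1}\mp 1,\dots,q^{m_t}\mp 1\,]$ governed by a partition of the defining parameter.

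For part (i), let $rs\in\omega(L)$ be witnessed by an element $g$ whose order divides such an lcm. Then $r$ divides some factor $q^a\mp 1$ and $s$ divides some factor $q^b\mp 1$ occurring in it, so by the principle above $i\mid a$ and $j\mid b$ (in the appropriately signed sense), whence $\Phi_i(q)\mid q^a\mp 1$ and $\Phi_j(q)\mid q^b\mp 1$. These divisibility conditions depend only on $i,j$ and not on the specific primes $r,s$; therefore the same lcm, and hence the same element order, is divisible by every $r'\in R_i(q)$ and every $s'\in R_j(q)$. As $r',s'$ are distinct odd primes, their product divides it, so $r's'\in\omega(L)$. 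The degenerate case $i=j$ is handled identically, the two distinct primes being carried by a single factor $q^a\mp 1$.

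Part (ii) is the same argument with one factor replaced by the characteristic. If $pr\in\omega(L)$, then some element has a nontrivial unipotent part (contributing the $p^k$) commuting with a semisimple part of order divisible by $r$, so the underlying partition has a part of size divisible by $i$ alongside the room reserved for the unipotent contribution. Replacing $r$ by any $r'\in R_i(q)$ leaves this partition data --- and hence the admissibility of the order in $\omega(L)$ --- unchanged, giving $pr'\in\omega(L)$.

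The main obstacle is bookkeeping rather than conceptual: one must run the uniform reduction across all four families of classical groups with their sign conventions (the $\varepsilon$ for unitary groups, the $\pm$ choices for the symplectic and orthogonal ones) and, in (ii), confirm that inserting the unipotent part does not shrink the available partition below the size $i$ needed to host the primitive prime divisor. The requirement $i,j\geq 3$ is exactly what keeps $r,s$ away from the index $d$ and from the diagonal automorphisms, so that the passage to the central quotient remains clean and the torus data faithfully detect the primes $r,s$.
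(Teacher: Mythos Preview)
The paper does not prove this lemma: it is quoted verbatim as \cite[Lemma~2.2]{VasBig} and used as a black box, so there is no in-paper argument to compare against. Your sketch supplies exactly the standard proof one would expect to find in the cited source: reduce to the explicit description of $\omega(L)$ as the set of divisors of expressions $p^k\cdot[\,q^{m_1}\mp1,\dots,q^{m_t}\mp1\,]$, and observe that whether a prime coprime to $pq$ divides such an expression depends only on $e(\cdot,q)$, not on the prime itself. The hypothesis $r,s\notin\pi(q^2-1)$ forces $i,j\geq 3$, which keeps $r,s$ (and $r',s'$) coprime to the index $d$ and makes the passage to the simple quotient transparent; your remark to this effect is on point. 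The argument is sound, with the honest caveat you already flag: the sign bookkeeping for the unitary, symplectic and orthogonal families (relating $R_i(q)$ to the $\varepsilon$- or $\pm$-twisted factors) has to be written out case by case, and in part~(ii) one should check that the unipotent block size $p^{k-1}+1$ does not consume the partition part carrying $R_i(q)$---but this is exactly the routine verification the Buturlakin-type formulas are designed for.
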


For a classical group $L$, we put $prk(L)$ to denote its dimension if $L$ is a linear or unitary group, and its Lie rank if $L$ is a symplectic or orthogonal group. Now we introduce a new function in order to unify further arguments on classical groups. Namely, given a simple classical group $L$ over a field of order $q$ and a prime $r$ coprime to $q$, we put

$$\varphi(r,L)=\begin{cases}
e(r, \varepsilon q),& \text{ if } L=L^{\varepsilon}_n(q),\\
\eta(e(r, q)),& \text{ if } L \text{ is symplectic or orthogonal}.
\end{cases}$$

Where $\eta(a)=a$ if $a$ is odd, and $\eta(a)=a/2$ if $a$ is even.

\begin{lem}\cite[Lemma 2.4]{VasBig}\label{svyz}
Let $L$ be a simple classical group over a field of order $q$ and characteristic $p$,
and let $prk(L)=n\geq4$.

(i) If $r\in\pi(L)\setminus\{p\}$, then $\varphi(r,L)\leq n$.

(ii) If $r$ and $s$ are distinct primes from $\pi(L)\setminus\{p\}$ with $\Phi(r,L)\leq n/2$ and $\Phi(s,L)\leq n/2$,
then $rs\in\omega(L)$.

(iii) If $r$ and $s$ are distinct primes from $\pi(L)\setminus\{p\}$ with $n/2<\Phi(r,L)\leq n$ and $n/2<
\Phi(s,L)\leq n$, then $r$ and $s$ are adjacent in $GK(L)$ if and only if $e(r, q)=e(s, q)$.

(iv) If $r$ and $s$ are distinct primes from $\pi(L)\setminus\{p\}$ and $e(r, q) = e(s, q)$, then $r$ and $s$
are adjacent in $GK(L)$.

\end{lem}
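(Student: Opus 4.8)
The plan is to reduce the adjacency question to the combinatorics of maximal tori. Since $r,s\in\pi(L)\setminus\{p\}$ are coprime to the characteristic, any elements of order $r$, $s$, or $rs$ are semisimple, and semisimple elements lie in maximal tori. Because a maximal torus is abelian, two distinct primes $r,s$ are adjacent in $GK(L)$ (equivalently $rs\in\omega(L)$) if and only if $L$ possesses a maximal torus $T$ with $rs\mid|T|$. Thus the whole lemma becomes a statement about which tori are divisible by $r$ and by $s$. The maximal tori of a classical group are indexed by conjugacy classes of its Weyl group, and their orders are products of factors $q^{k}-1$ and $q^{k}+1$ governed by a signed partition of $\prk(L)=n$: for $L^{\varepsilon}_n(q)$ a partition $n=n_1+\dots+n_t$ contributes factors $q^{n_i}-\varepsilon^{n_i}$, while in the symplectic/orthogonal case a partition of $n$ with signs contributes factors $q^{n_i}\mp1$. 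The key dictionary I would establish, reading off divisibility from Lemma~\ref{Spectrum} and its symplectic/orthogonal analogues, is that a prime $r\neq p$ divides such a factor exactly when the corresponding part has size a multiple of $\varphi(r,L)$ (linear/unitary) respectively of $e(r,q)$ (symplectic/orthogonal, where the sign of the part records the parity of $e(r,q)$). Informally, $r$ ``needs'' a part of size $\varphi(r,L)$.

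Granting this dictionary, parts (i) and (iv) are immediate. For (i), if $r\mid|L|$ then $r$ divides some torus factor, so $\varphi(r,L)\le n_i\le n$. For (iv), if $e(r,q)=e(s,q)$ then $r$ and $s$ divide the \emph{same} factor $q^{e}-\delta$ of a single cyclic torus, which therefore contains an element of order $rs$; note no bound on $\varphi$ is used here.

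For (ii), assuming $\varphi(r,L),\varphi(s,L)\le n/2$, I would exhibit a partition of $n$ containing one part accommodating $r$ and a second, disjoint part accommodating $s$; the two part-sizes sum to at most $n$, so such a partition exists, its associated torus is divisible by $rs$, and hence $r$ and $s$ are adjacent. The forward direction of (iii) is a special case of (iv). For the reverse direction of (iii), suppose $n/2<\varphi(r,L),\varphi(s,L)\le n$ and $r$ is adjacent to $s$, so both divide a common torus. They cannot occupy two distinct parts, since two part-sizes each exceeding $n/2$ would sum to more than $n$; hence they divide a single part $m\le n$, forcing the least common multiple of the two indices to divide $m$. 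But if the two indices are distinct, both lie in $(n/2,n]$, neither divides the other, and so their least common multiple is a proper multiple of the larger index, hence at least twice it and therefore greater than $n$ --- a contradiction. Thus $e(r,q)=e(s,q)$.

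The main obstacle is making the dictionary precise for symplectic and orthogonal groups, where $\eta$ is two-to-one and $\varphi$ alone does not determine the block type: a prime with $e(r,q)$ odd sits in a plus-type block of dimension $e(r,q)$, whereas a prime with $e(s,q)=2e(r,q)$ sits in a minus-type block of the same $\eta$-value but a genuinely different actual dimension, and these cannot be merged into one cyclic factor. This is exactly why (iii) is phrased via $e(r,q)$ rather than $\varphi$, and I would handle it by tracking each part as a signed pair (size, sign) and checking that two such blocks can be placed disjointly in case (ii) or must coincide in case (iii). Secondary bookkeeping is needed to pass from the full linear or isometry group to the simple quotient $L$: the center and the factor $d=(n,q-\varepsilon1)$ involve only primes with $\varphi=1$, so orders of elements with $\varphi\ge2$ are unaffected, while the finitely many $\varphi=1$ primes (which always fit within $n/2$) must be checked directly against the spectrum formulas. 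A few small-rank or small-$q$ configurations may also require separate verification.
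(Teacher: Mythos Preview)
The paper does not give its own proof of this lemma: it is quoted verbatim as \cite[Lemma~2.4]{VasBig} and left unproved, so there is nothing in the present paper to compare your argument against. Your sketch is the standard route---reduce adjacency of semisimple primes to the existence of a maximal torus divisible by both, parametrize tori by (signed) partitions of $n$, and translate the divisibility condition on a block $q^{m}\mp1$ into a condition on $e(r,q)$ and $\varphi(r,L)$---and this is essentially how the result is established in the cited source, which in turn rests on the Vasil'ev--Vdovin adjacency criterion \cite{VV}. Your handling of (i), (ii), (iv) is fine; in (iii) the symplectic/orthogonal case needs the short extra check you flagged (a block $q^m-1$ forces both $e$'s odd and $>n/2$, a block $q^m+1$ forces both $e$'s even with halves $>n/2$), after which the ``two numbers in $(n/2,n]$ with distinct values have lcm $>n$'' argument goes through. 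That bookkeeping, plus the passage from $GL/GU/Sp/O$ to the simple quotient for the $\varphi=1$ primes, is exactly the work done in \cite{VasBig}.
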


\begin{lem}\cite[Lemma 3.5]{VasBig}\label{action}
Let $L$ be a simple classical group over a field of order $q$ and characteristic $p$,
$r\in\pi(L), r^s\in\omega(P)$, where $P$ is a proper parabolic subgroup of $L$, and $(r, 6p(q +1))=1$.
If $L$ acts faithfully on a vector space $V$ over the field of characteristic $t$ distinct from $p$,
then $tr^s\in\omega(V\leftthreetimes L)$.
\end{lem}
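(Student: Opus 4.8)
The plan is to realise an element of order $tr^s$ inside $G=V\leftthreetimes L$ by combining a $t$-element coming from $V$ with an element $g\in L$ of order $r^s$. Writing elements of $G$ as pairs $(v,g)$ with $(v,g)(w,h)=(v+gw,gh)$, an easy induction gives $(v,g)^{r^s}=(N_g v,1)$, where $N_g=1+g+\dots+g^{r^s-1}$ and $|g|=r^s$; hence $(v,g)$ has order $tr^s$ as soon as $N_g v\neq 0$ for a suitable $v$. When $t\neq r$ the operator $\tfrac1{r^s}N_g$ is the projection of $V$ onto the fixed space $C_V(g)$, so $N_g\neq 0$ is equivalent to $C_V(g)\neq 0$; choosing $0\neq v\in C_V(g)$ then does the job. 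Thus for $t\neq r$ the whole statement reduces to the single claim: \emph{there is an element $g\in L$ of order $r^s$ with $C_V(g)\neq 0$}. (The case $t=r$ is genuinely different: there $N_g=(g-1)^{r^s-1}$ on $V$, and one must instead produce a $g$ whose image in $GL(V)$ has a Jordan block of size exactly $r^s$; I would treat this case separately at the end.)

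Next I would reduce to $V$ irreducible and faithful. Since $L$ is simple and acts faithfully, some composition factor $W$ of $V$ is a faithful $\mathbb{F}_t L$-module. For $t\neq r$ the group $\langle g\rangle$ is a $t'$-group, so the functor of $\langle g\rangle$-invariants is exact; applied to $0\to V_0\to V_1\to W\to 0$ with $W=V_1/V_0$, this shows $C_W(g)\neq 0$ forces $C_{V_1}(g)\neq 0$ and hence $C_V(g)\neq 0$. So it suffices to find $g$ of order $r^s$ with a nonzero fixed point on a faithful irreducible cross-characteristic module $W$. To locate $g$, I use the parabolic hypothesis: $P=U\leftthreetimes\hat L$ with unipotent radical $U$ a $p$-group and Levi complement $\hat L$ a central product of general linear groups and a smaller-rank classical group over $\mathbb{F}_q$. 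Since $(r,p)=1$, the $p'$-subgroup realising $r^s$ is conjugate into $\hat L$, so $r^s\in\omega(\hat L)$; moreover $(r,6p(q+1))=1$ forces $r\ge 5$, $r\neq p$ and $e(r,q)\neq 2$, which pins down the eigenvalue pattern of the semisimple $g$ on the natural module and guarantees that $g$ fixes a nonzero subspace and lies in a proper reductive subgroup of $L$ (this is the information encoded by the function $\varphi$ and Lemma~\ref{svyz}).

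The heart of the argument — and the step I expect to be the main obstacle — is then to prove that such a $g$ has eigenvalue $1$ on the faithful irreducible module $W$, i.e.\ $C_W(g)\neq 0$. The naive route via Harish-Chandra restriction, realising $C_W(U)$ as an $\hat L$-module, fails exactly for the cuspidal $W$, where $C_W(U)=0$, so a softer fixed-point statement is needed. Instead I would bound the multiplicity of the eigenvalue $1$ from below, writing $\dim C_W(g)=\tfrac1{r^s}\sum_{i=0}^{r^s-1}\chi_W(g^i)$ with $\chi_W$ the Brauer character, and show this is positive using that $g$ is semisimple, lies in a proper parabolic, and is non-regular on the natural module, combined with the known lower bounds on fixed-space dimensions of such elements in cross-characteristic representations of classical groups. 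The arithmetic restrictions $(r,6p(q+1))=1$ are precisely what keep $g$ far enough from a regular element (large fixed subspace, $e(r,q)$ not too small) for these bounds to force eigenvalue $1$ on \emph{every} faithful $W$; establishing this uniformly, especially in the cuspidal case, is where the real work lies and may require invoking an external cross-characteristic eigenvalue result rather than a soft argument.

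Finally, for $t=r$ I would argue directly on Jordan form: an element of order $r^s$ acts unipotently on the characteristic-$r$ module $V$, and the same placement of $g$ inside a proper Levi $\hat L$ — now in the defining-prime situation for $r$ — is used to exhibit a Jordan block of size exactly $r^s$, so that $(v,g)$ attains order $r^{s+1}=tr^s$. Combining the two cases yields $tr^s\in\omega(V\leftthreetimes L)$ as required.
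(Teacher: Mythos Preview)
The paper does not give its own proof of this lemma: it is quoted verbatim as \cite[Lemma~3.5]{VasBig} and used as a black box. So there is no argument in the present paper to compare your proposal against. That said, the paper also quotes, immediately below, the Hall--Higman type statement \cite[Lemmas~3.3,~3.6]{VasBig} (Lemma~\ref{fact} here), and that is not an accident: Vasil'ev's proof of the cited Lemma~3.5 is an application of that Hall--Higman lemma, not of cross-characteristic character bounds.

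Your reductions up to and including ``it suffices to find $g$ of order $r^{s}$ with $C_V(g)\neq 0$'' are fine, and you correctly locate $g$ inside a Levi complement of $P=U\rtimes\hat L$. The gap is what you do next: you discard the unipotent radical $U$ and try to force $C_W(g)\neq 0$ by Brauer-character estimates on an irreducible cross-characteristic module $W$. This is exactly the hard route, and you already see that the cuspidal case resists it. The intended mechanism is the opposite: keep $U$. One checks (using $(r,6p(q+1))=1$, so that $r\geq 5$, $r\neq p$ and $e(r,q)\neq 1,2$, which controls how $g$ sits in the natural module) that $g$ does not centralise $U$, so $H=U\rtimes\langle g\rangle$ satisfies the hypotheses of Lemma~\ref{fact} with $T=U$ a $p$-group and $[T,g]\neq 1$, acting faithfully on $V$ in characteristic $t\neq p$. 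The exceptional alternative in Lemma~\ref{fact} is ruled out by the arithmetic conditions on $r$, so the minimal polynomial of $g$ on $V$ is $x^{r^{s}}-1$. For $t\neq r$ this gives $C_V(g)\neq 0$ directly; for $t=r$ it says $(x-1)^{r^{s}}$ is the minimal polynomial, i.e.\ $g$ has a Jordan block of size $r^{s}$ on $V$, which is precisely what you needed and could not extract from ``placing $g$ in a Levi'' alone. In particular your case split $t=r$ versus $t\neq r$ becomes unnecessary once the Hall--Higman conclusion is in hand.
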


\begin{lem}\cite[Lemmas 3.3,~3.6]{VasBig}\label{fact}
Let $s$ and $p$ be distinct primes, a group $H$ be a semidirect
product of a normal $p$-subgroup $T$ and a cyclic subgroup $C=\langle g\rangle$ of order $s$,
and let $[T, g]\neq 1$. Suppose that $H$ acts faithfully on a vector space $V$ of
positive characteristic $t$ not equal to $p$.

If the minimal polynomial of $g$ on $V$ equals to $x^s-1$, then $C_V(g)$ is non-trivial.

If the minimal polynomial of $g$ on $V$ does not equal $x^s-1$, then

$($i$)$ $C_T(g)\neq 1$;

$($ii$)$ $T$ is nonabelian;

$($iii$)$ $p=2$ and $s = 2^{2^{\delta}}+1$ is a Fermat prime.
\end{lem}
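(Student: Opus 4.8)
The plan is to exploit the coprime action of the normal $p$-subgroup $T$ on $V$ (legitimate since $t\neq p$) through Clifford theory. For the first assertion nothing deep is needed: as $g^s=1$, the minimal polynomial $\mu$ of $g$ on $V$ divides $x^s-1$, and if $\mu=x^s-1$ then the linear factor $x-1$ divides $\mu$ over the ground field, so $g-1$ is singular and $C_V(g)=\ker(g-1)\neq 0$.

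For the second assertion I would first replace the ground field by its algebraic closure—this changes none of the relevant data (the minimal polynomial, the spaces $C_V(g)$ and $C_T(g)$, whether $T$ is abelian, and the primes $p,s$)—and decompose $V=\bigoplus_i V_i$ into its $T$-homogeneous components, which exist because $t\neq p$. Since $\langle g\rangle$ has prime order it permutes the $V_i$ in orbits of length $1$ or $s$; an orbit of length $s$ would give, for any nonzero $w\in V_i$, linearly independent vectors $w,gw,\dots,g^{s-1}w$ cyclically permuted by $g$, so that $g$ restricted to their span is the companion matrix of $x^s-1$ and $\mu=x^s-1$ after all. Hence in the present case $g$ stabilises every $V_i$. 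Writing $V_i=U_i\otimes M_i$ with $U_i$ an irreducible $T$-module, each $z\in Z(T)$ acts on $U_i$, and hence on $V_i$, as a scalar; this scalar commutes with the (now well-defined) operator of $g$ on $V_i$, so the operators of $z$ and of $z^{\,g}=g^{-1}zg$ agree on every $V_i$ and therefore on $V$. By faithfulness $z^{\,g}=z$, i.e. $g$ centralises $Z(T)$. If $T$ were abelian this would read $[T,g]=1$, contrary to hypothesis, which proves (ii); and as $T$ is then a nontrivial nonabelian $p$-group we have $1\neq Z(T)\le C_T(g)$, which proves (i).

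The force of the lemma lies in (iii), a Hall–Higman phenomenon, and this is where I expect the real work. Retaining the hypothesis that the minimal polynomial is not $x^s-1$, I would carry out the standard reduction: passing to a suitable $g$-invariant section of $T$ and to a faithful irreducible constituent of $V$, one arrives at the configuration in which $T$ is extraspecial, $V$ is (a multiple of) its unique faithful irreducible module, and $g$ acts irreducibly on $T/Z(T)$. On this module the minimal polynomial of $g$ can be computed directly, and the computation is exactly the exceptional case of Hall–Higman Theorem~B for an element of prime order: the minimal polynomial equals $x^s-1$ unless the prime in question is $2$. Thus $p=2$ and $T$ is extraspecial of order $2^{1+2k}$ with $g$ acting irreducibly on $T/Z(T)\cong\mathbb F_2^{\,2k}$; irreducibility forces $2k=e(s,2)$, and the symplectic form carried by the commutator map, together with the requirement that the minimal polynomial drop below degree $s$, forces $s$ to be a Fermat prime $s=2^{2^{\delta}}+1$, so that $2k=2^{\delta+1}$ and the faithful module has dimension $2^{k}=s-1<s$. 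The main obstacle is precisely this reduction to extraspecial type and the eigenvalue computation behind the Fermat exception; the guiding prototype is $SL_2(3)=Q_8\rtimes C_3$ on its natural two-dimensional module, where the order-$3$ element has no eigenvalue $1$ and minimal polynomial of degree $2$.
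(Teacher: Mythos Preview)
The paper does not prove this lemma at all: it is quoted verbatim from Vasil'ev's paper \cite[Lemmas~3.3, 3.6]{VasBig} and used as a black box, so there is no ``paper's own proof'' to compare against.

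That said, your argument is sound and is essentially the standard proof of these lemmas. The first assertion is immediate as you say. For (i) and (ii) your use of the $T$-homogeneous decomposition is exactly right: an orbit of $\langle g\rangle$ of length $s$ on the components produces a regular permutation module and hence minimal polynomial $x^{s}-1$, so in the complementary case every component is $g$-stable, central elements of $T$ act as scalars on each component, and faithfulness gives $[Z(T),g]=1$. This yields (ii) and then (i) at once. Your sketch for (iii) is the correct Hall--Higman reduction: cut down to an extraspecial section with $g$ irreducible on the Frattini quotient, restrict to the faithful irreducible of that extraspecial group, and compute the eigenvalue multiplicities of $g$; the only way the degree of the minimal polynomial can drop below $s$ is the familiar Fermat exception with $p=2$, $s=2^{2^{\delta}}+1$, and module dimension $s-1$. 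The $SL_{2}(3)$ example you give is the base case. One small caution: make explicit that passing to the algebraic closure does not affect the minimal polynomial of $g$ (it does not, since that polynomial already lies in the prime field), and that the reduction in (iii) requires first replacing $V$ by an irreducible $H$-summand on which the minimal polynomial of $g$ still has degree less than $s$ and on which $T$ acts nontrivially; both steps are routine but should be stated.
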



\begin{lem}{\rm\cite[Lemma 3.8]{VasBig}}\label{l:adjanisotrop}
For a simple classical group  $S$ over a field of order $u$ and characteristic $v$ with
$\prk(S)=m\geq4$, put

$$j=\begin{cases}
m & \text{ if }S\simeq L_{m}(u),\\
2m-2 & \text{ if either }S\simeq O^+_{2m}(u)\text{ or }S\simeq U_{m}(u)\text{ and }m\text{ is even,}\\
2m & \text{ otherwise.}
\end{cases}$$

Then $(r_j(u),|P|)=1$ for every proper parabolic
subgroup $P$ of~$S$. If $i\neq j$ and a primitive prime divisor $r_i(u)$ lies in $\pi(L)$ then
there is a proper parabolic subgroup $P$ of $S$ such that $r_i(u)$ lies in~$\omega(P)$. In
particular, if two distinct primes $r,s\in\pi(S)$ do not divide the order of any proper parabolic
subgroup of $S$ then $r$ and $s$ are adjacent in $\Gamma(S)$.
\end{lem}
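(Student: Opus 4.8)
The plan is to reduce everything to bookkeeping of cyclotomic factors. For a prime $r=r_i(u)$ one has $r\mid u^k-1$ if and only if $i\mid k$, and $r$ divides a factor of the shape $u^k+1$ precisely when $i\mid 2k$ but $i\nmid k$; equivalently, $r_i(u)$ divides the order of a subgroup $H\le S$ exactly when the cyclotomic polynomial $\Phi_i(u)$ divides $|H|$. Now the order of any classical group $S$ of the listed families is $v^N$ times a product of factors $u^k-1$ (and, for the unitary, symplectic and minus-type orthogonal groups, also factors $u^k+1$), and each maximal parabolic $P$ has a Levi decomposition $P=UL$ in which $U$ is a nontrivial unipotent $v$-group and the Levi factor $L=A\times S'$ is a product of a general linear group $A=GL_k$ (over $u$, or over $u^2$ in the unitary case) and a classical group $S'$ of the same family but of smaller rank. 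Consequently $v$ divides $|P|$ for every proper parabolic $P$, and the non-characteristic part of $|P|$ is built from cyclotomic factors $\Phi_d(u)$ whose degrees $d$ are controlled by $k$ and by $\prk(S')$.

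With this dictionary, the first assertion amounts to showing that $j$ is strictly larger than every degree $d$ for which $\Phi_d(u)$ can divide a Levi factor order, so that $\Phi_j(u)\nmid|P|$ and hence $(r_j(u),|P|)=1$. I would verify this family by family. For $L_m(u)$ the Levi factors are $GL_a(u)\times GL_b(u)$ with $a+b=m$, contributing degrees at most $\max(a,b)\le m-1<m=j$. For $Sp_{2m}(u)$, $O^-_{2m}(u)$ and $O_{2m+1}(u)$ the top degree is $2m$, realized in $|S|$ by $u^m+1$ (minus and odd-orthogonal cases) or by $u^{2m}-1$ (symplectic), while every Levi factor only produces degrees $\le 2(m-1)<2m$. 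For $O^+_{2m}(u)$ the polynomial $\Phi_{2m}(u)$ is absent from $|S|$ and the genuine leading degree is $2m-2$, coming from $u^{2(m-1)}-1$; again the Levi degrees stay below it. For $U_m(u)$ the top factor is $u^m-(-1)^m$, so when $m$ is odd the leading cyclotomic is $\Phi_{2m}(u)\mid u^m+1$ and $j=2m$, whereas for $m$ even the factor $u^m-1$ only yields $\Phi_m$, the true leading cyclotomic is $\Phi_{2m-2}(u)\mid u^{m-1}+1$, and $j=2m-2$; in both subcases the Levi factors $GL_k(u^2)\times U_{m-2k}(u)$ give strictly smaller degrees. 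In each family this is a single inequality once the order formula and the list of Levi factors are written down.

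For the converse, suppose $i\neq j$ and $r_i(u)\in\pi(S)$. Then $\Phi_i(u)\mid|S|$, so $i$ is one of the degrees occurring in the order formula and hence is dominated by the leading degree $j$. One can then exhibit a maximal parabolic whose Levi factor already carries the factor $\Phi_i(u)$: for $L_m(u)$ the stabilizer of an $i$-dimensional subspace has Levi $GL_i(u)\times GL_{m-i}(u)$, and the Singer cycle of $GL_i(u)$, cyclic of order $u^i-1$, contains an element of order exactly $r_i(u)$; for the remaining families the smaller classical Levi factor $S'$ or the general linear factor plays the same role. Hence $r_i(u)\in\omega(P)$ for a suitable proper parabolic $P$, as claimed.

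Finally the ``in particular'' clause follows formally. If $r,s\in\pi(S)$ are distinct and neither divides $|P|$ for any proper parabolic $P$, then since $v$ divides every such $|P|$ we get $r,s\neq v$; and by the two assertions just proved the only non-characteristic primes of $\pi(S)$ avoiding all proper parabolics lie in $R_j(u)$, so that $e(r,u)=e(s,u)=j$. Lemma~\ref{svyz}(iv) then yields that $r$ and $s$ are adjacent in $\Gamma(S)$. The main obstacle is the second paragraph: the verification that $j$ is the unique leading cyclotomic degree and that every smaller admissible degree is absorbed by a parabolic must be carried out separately for the linear, unitary (even and odd rank), symplectic, and both orthogonal types, keeping careful track of which factors have the shape $u^k-1$ and which have the shape $u^k+1$; the remaining steps are immediate consequences of this bookkeeping together with Lemma~\ref{svyz}.
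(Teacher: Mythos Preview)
The paper does not prove this lemma: it is quoted as \cite[Lemma~3.8]{VasBig} and invoked as a black box, so there is no argument in the present paper against which to compare your proposal.

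For what it is worth, your outline is the standard route and is essentially how the cited source proceeds: read off from the order formulae which cyclotomic factors $\Phi_d(u)$ occur in $|S|$, verify family by family that the Levi complements of maximal parabolics omit exactly $\Phi_j(u)$, realise every other admissible degree through a Singer torus sitting in a suitable Levi factor, and then deduce the final clause from Lemma~\ref{svyz}(iv). The only places requiring a little extra care in the case analysis are $O^+_{2m}(u)$, where one must also account for the parabolics with Levi $GL_m(u)$ coming from maximal totally isotropic subspaces, and $U_m(u)$ with $m$ even, where the $GL_k(u^2)$ Levi factor contributes even cyclotomic degrees up to $2k$; in both instances the degrees still stay strictly below $j$, so your conclusion stands.
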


\begin{lem}\cite[Lemma 10]{ZavMaz}\label{Zav}
Let $N$ be a normal elementary abelian $p$-subgroup of $G$, $K=G/N$, and $G_1=NK$ be
the natural semidirect product. Then $\omega(G_1)\subseteq\omega(G)$.
\end{lem}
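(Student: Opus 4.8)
The plan is to exploit that $N$ is abelian, so that the conjugation action of $G$ on $N$ is trivial on $N$ itself and hence factors through $K=G/N$; this is precisely the action defining the natural semidirect product $G_1=N\rtimes K$. Thus $G$ and $G_1$ are two extensions of $K$ by the same $K$-module $N$, the second one being split, and I want to show that every element order occurring in the split extension already occurs in $G$. I would fix $n\in\omega(G_1)$, choose $y\in G_1$ of order $n$, let $w\in K$ be its image, and write $m=|w|$. The first step is to establish the basic dichotomy that $n\in\{m,mp\}$, both in $G_1$ and, for any preimage of $w$, in $G$.

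The key computation is the power formula valid in any extension of $K$ by $N$: if $x$ lies over $w$ and $u\in N$, then, writing $N$ additively and letting $w$ act on $N$, one gets $(xu)^{j}=x^{j}\bigl(\sum_{i=0}^{j-1}w^{-i}\bigr)u$, so that $(xu)^{m}=x^{m}+\nu_w u$, where $\nu_w=1+w+\dots+w^{m-1}$ is the norm operator and $x^{m}\in N$. Since $N$ is elementary abelian of exponent $p$, the element $(xu)^{m}\in N$ has order $1$ or $p$, and therefore $xu$ has order $m$ or $mp$ according as $x^{m}+\nu_w u$ vanishes or not. Applying this inside $G_1$ (where one may take the base point with $x^{m}=0$) and inside $G$ (where $x^{m}$ is some fixed $c\in N$, well defined modulo $\operatorname{Im}\nu_w$) reduces the whole problem to comparing which of the two values $m$, $mp$ is realized over a given $w$.

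It then remains to treat the two cases. If $n=mp$, the fact that $G_1$ realizes $mp$ over $w$ means $\nu_w u\neq0$ for some $u$, i.e.\ $\operatorname{Im}\nu_w\neq0$; since $\operatorname{Im}\nu_w$ is a nontrivial subgroup of $N$, the affine map $u\mapsto c+\nu_w u$ cannot be identically zero (otherwise $c=0$ and $\nu_w\equiv0$), so some preimage of $w$ in $G$ also has order $mp=n$. If $n=m$, I pick any preimage $x\in G$ of $w$: its order is $m$ or $mp$, and in the latter case $x^{p}$ has order $mp/\gcd(mp,p)=m$, so $m\in\omega(G)$ in either event. The main obstacle is exactly that $G$ may be a non-split extension, so one cannot simply transport a complement from $G_1$ into $G$; the two devices above — nontriviality of $\operatorname{Im}\nu_w$ in the $mp$ case and passage to a $p$-th power in the $m$ case — are what let me bypass the cohomological obstruction and conclude that $\omega(G_1)\subseteq\omega(G)$.
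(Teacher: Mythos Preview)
Your argument is correct. The power formula $(xu)^{m}=x^{m}\cdot\nu_{w}u$ in any extension of $K$ by the abelian module $N$, together with the elementary abelian hypothesis forcing $|xu|\in\{m,mp\}$, is exactly the right tool, and your two-case finish is clean: in the $mp$ case you correctly observe that $\operatorname{Im}\nu_{w}\neq0$ forces the affine map $u\mapsto c+\nu_{w}u$ to be non-vanishing somewhere, and in the $m$ case the passage to $x^{p}$ handles the possibility that every preimage of $w$ in $G$ has order $mp$.

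As for comparison: the paper does not prove this lemma at all; it is quoted verbatim as \cite[Lemma~10]{ZavMaz} and used as a black box. Your write-up is essentially the standard proof of that result (and indeed is the argument given in the Zavarnitsine--Mazurov paper), so there is no alternative approach in the present paper to contrast it with.
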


\begin{lem}\cite[Lemma 5]{GrVas}\label{Frobenius}
Let $L$ be a finite simple group $L_n(q)$, $d=(q-1,n)$.
If there exists a primitive prime divisor $r$ of $q^n-1$ then $L$ includes a Frobenius subgroup with
kernel of order $r$ and cyclic complement of order $n$.

\end{lem}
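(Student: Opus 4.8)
The plan is to exhibit the Frobenius subgroup explicitly inside a Singer-cycle normalizer and then to descend from $GL_n(q)$ to the simple group $L=L_n(q)=SL_n(q)/Z$, where $Z=Z(SL_n(q))$ has order $d=(q-1,n)$. Identifying $\mathbb{F}_{q^n}$ with an $n$-dimensional $\mathbb{F}_q$-space, the multiplication action of $\mathbb{F}_{q^n}^*$ gives a Singer cycle $S\cong\mathbb{F}_{q^n}^*$ in $GL_n(q)$, whose normalizer is $\Gamma L_1(q^n)=S\rtimes\langle\phi\rangle$, where $\phi\colon x\mapsto x^q$ is the field Frobenius of order $n$ acting on $S$ as the automorphism $\sigma\colon x\mapsto x^q$. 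Let $C\le S$ be the (unique, hence characteristic) cyclic subgroup of order $r$. Because $r$ is a primitive prime divisor of $q^n-1$ and $n\ge2$, we have $r\nmid q-1$, so $r$ divides $(q^n-1)/(q-1)=|\ker N|$, where $N=N_{\mathbb{F}_{q^n}/\mathbb{F}_q}$ is the norm map; thus $C\le S\cap SL_n(q)$.

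Next I would arrange for a complement of determinant one. Using a normal basis one computes $\det\phi=(-1)^{n-1}$, so I would pick $t_0\in S$ with $N(t_0)=(-1)^{n-1}$ (possible since $N$ is onto $\mathbb{F}_q^*$) and set $\phi'=t_0\phi\in SL_n(q)$. Since $t_0$ lies in the abelian group $S$, conjugation by $\phi'$ acts on $S$ exactly as $\sigma$, so $\phi'$ normalizes $C$. The crucial computation, using $\phi^n=1$, is
\[ \phi'^{\,n}=\Bigl(\prod_{i=0}^{n-1}\sigma^i(t_0)\Bigr)\phi^n=t_0^{\,(q^n-1)/(q-1)}=N(t_0)\,I=(-1)^{n-1}I\in Z, \]
so the image $\bar\phi'$ of $\phi'$ in $L$ satisfies $\bar\phi'^{\,n}=1$.

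Finally I would pass to $L=SL_n(q)/Z$ and verify the two defining properties of a Frobenius group. Since $Z\le S$ consists of scalars and $\gcd(r,d)=1$, we get $C\cap Z=1$, so the image $\bar C$ still has order $r$. The order of $\bar\phi'$ is exactly $n$: writing $\phi'^{\,j}=t_0^{\,(q^j-1)/(q-1)}\phi^j$ with $t_0^{\,(q^j-1)/(q-1)}\in S$, if $\bar\phi'^{\,j}=1$ then $\phi'^{\,j}\in Z\subseteq S$, forcing $\phi^j\in S$ and hence $\phi^j=1$ (as $S\cap\langle\phi\rangle=1$), i.e.\ $n\mid j$. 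For fixed-point-freeness, suppose some $\bar c\in\bar C\setminus\{1\}$ is centralized by $\bar\phi'^{\,j}$ with $0<j<n$; lifting, conjugation by $\phi'^{\,j}$ sends $c$ to $c^{\sigma^j}=c^{\,q^j}$, so $c^{\,q^j-1}\in Z\cap\langle c\rangle$, an element of order dividing $\gcd(r,d)=1$; hence $c^{\,q^j-1}=1$, i.e.\ $r\mid q^j-1$, contradicting that $r$ is primitive for $q^n-1$. Thus $\langle\bar\phi'\rangle$, cyclic of order $n$, acts fixed-point-freely on $\bar C$, and $\bar C\rtimes\langle\bar\phi'\rangle\le L$ is the required Frobenius subgroup with kernel of order $r$ and cyclic complement of order $n$.

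The linear algebra over finite fields is routine; the genuine obstacle is the descent to the simple group, namely forcing the complement into $SL_n(q)$ (resolved by correcting $\phi$ with a Singer element of prescribed norm) and controlling the effect of quotienting by $Z$ so that it neither collapses the order of the complement below $n$ nor manufactures new fixed points on the kernel. Both are governed entirely by the coprimality statements $r\nmid q-1$ and $\gcd(r,d)=1$, which follow from primitivity of $r$ together with $d\mid q-1$.
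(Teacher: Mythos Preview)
The paper does not give its own proof of this lemma; it is quoted from \cite{GrVas} without argument. Your proof is correct and self-contained: the Singer-cycle normalizer $S\rtimes\langle\phi\rangle\cong\Gamma L_1(q^n)$ is the right place to look, the determinant correction $\phi'=t_0\phi$ with $N(t_0)=(-1)^{n-1}$ lands the complement in $SL_n(q)$, and the descent to $L=SL_n(q)/Z$ is handled cleanly since the single arithmetic fact $\gcd(r,d)=1$ (immediate from $d\mid q-1$ and $r\nmid q-1$) simultaneously guarantees that $\bar C$ retains order $r$, that $\bar\phi'$ retains order exactly $n$, and that no spurious fixed points appear modulo $Z$. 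The computation $(\phi')^n=N(t_0)\,I\in Z$ and the normal-basis justification of $\det\phi=(-1)^{n-1}$ are both correct.
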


Let $L=L_{2^m}(2)$, where $m>5$, $R=L^3$. Suppose that $G$ is a group such that $\omega(G)=\omega(R)$.
Set $r_i=r_i(2)$ and
$\Omega=\{r_{2^{m-1}},r_{2^{m-1}+1},...,r_{2^m}\}$.

\begin{lem}
$t(L)=2^{m-1}$.
\end{lem}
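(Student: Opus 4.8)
The plan is to bound $t(L)$ from below by exhibiting a concrete independent set of size $2^{m-1}$, and from above by sorting the primes of $\pi(L)$ according to their multiplicative order modulo $2$ and invoking the adjacency criteria of Lemma~\ref{svyz}. Throughout write $n=2^m$; since $(n,q-1)=(2^m,1)=1$ we have $L=L_n(2)=GL_n(2)$, so the denominators $d(q-1)$ and $(n/(n_1,n_2),q-1)$ occurring in Lemma~\ref{Spectrum} are all $1$, every odd prime $r\in\pi(L)$ satisfies $e(r,2)\le n$ by Lemma~\ref{svyz}(i), and $\varphi(r,L)=e(r,2)$.

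For the lower bound I would take $\Omega'=\Omega\setminus\{r_{2^{m-1}}\}=\{r_i: 2^{m-1}<i\le 2^m\}$. These $2^{m-1}$ primes exist by Zsigmondy's lemma (each index exceeds $6$) and lie in $\pi(L)$ because $i\le n$. For distinct $i,j$ in $(n/2,n]$ we have $n/2<\varphi(r_i,L)=i\ne j=\varphi(r_j,L)\le n$, so Lemma~\ref{svyz}(iii) gives $r_ir_j\notin\omega(L)$; hence $\Omega'$ is independent and $t(L)\ge 2^{m-1}$.

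For the upper bound, let $\Theta$ be an independent set; the goal is $|\Theta|\le 2^{m-1}$. First dispose of the prime $2$: using Lemma~\ref{Spectrum}(4) (a Jordan block of size $2$ placed alongside a torus block of degree $e(r,2)$) one sees that $2r\in\omega(L)$ for every odd prime $r$ with $e(r,2)\le n-2$, while $2r\notin\omega(L)$ when $e(r,2)\in\{n-1,n\}$ for dimension reasons; since primes with equal $e$-value are adjacent by Lemma~\ref{svyz}(iv), if $2\in\Theta$ then $|\Theta|\le 3<2^{m-1}$. So assume $2\notin\Theta$. The map $r\mapsto e(r,2)$ is injective on $\Theta$ by Lemma~\ref{svyz}(iv), and by Lemma~\ref{svyz}(ii) at most one $r\in\Theta$ has $e(r,2)\le n/2$. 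If there is no such prime, all values $e(r,2)$, $r\in\Theta$, are distinct and lie in $(n/2,n]$, so $|\Theta|\le n/2$. If there is one, say $r_0$ with $e_0:=e(r_0,2)\le n/2$, I would produce an index $j_0\in(n/2,n]$ with $e_0\mid j_0$: take $j_0=n$ if $e_0=n/2$, and otherwise let $j_0$ be the largest multiple of $e_0$ below $n$, which automatically exceeds $n/2$ since $e_0<n/2$. Then $r_0$ and any $s\in R_{j_0}(2)$ both divide $2^{j_0}-1$, which divides a member of $\omega(L)$ listed in Lemma~\ref{Spectrum}(1)--(2), so $r_0s\in\omega(L)$. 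Hence $\Theta$ contains no prime with $e$-value $j_0$, the injective image of $\Theta\setminus\{r_0\}$ omits $j_0$ from $(n/2,n]$, and $|\Theta|\le 1+(n/2-1)=n/2$. In every case $|\Theta|\le n/2=2^{m-1}$, and together with the lower bound this gives $t(L)=2^{m-1}$.

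The main obstacle is the upper bound, and inside it the bookkeeping forced by characteristic $2$: one must check that the prime $2$ cannot be used to enlarge an independent set, which requires the precise shape of the unipotent-times-semisimple element orders in Lemma~\ref{Spectrum}, and that a ``small'' prime (one with $e(r,2)\le n/2$) is always joined, through a suitable cyclotomic index between $n/2$ and $n$, to a ``large'' prime, so that trading the $n/2$ available large indices for one small prime never pays off. Everything else is a direct application of Lemmas~\ref{Spectrum} and~\ref{svyz}.
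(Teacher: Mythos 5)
Your proof is correct, and it is essentially a fully worked-out version of what the paper dismisses with the single line ``follows from Lemma~\ref{Spectrum}'': both the lower bound (the primes $r_i$ with $2^{m-1}<i\leq 2^m$ are pairwise non-adjacent) and the upper bound (injectivity of $r\mapsto e(r,2)$ on an independent set, at most one ``small'' $e$-value, and the separate treatment of the prime $2$) are exactly the computations that Lemma~\ref{Spectrum} and Lemma~\ref{svyz} are meant to supply. One point worth flagging: your independent set is $\Omega\setminus\{r_{2^{m-1}}\}$ rather than $\Omega$ itself, and that is the right choice --- since $r_{2^{m-1}}$ and $r_{2^m}$ both divide $2^{2^m}-1\in\omega(L)$, the set $\Omega$ as literally written in the paper (with $2^{m-1}+1$ elements) is not independent, so your version quietly repairs an off-by-one in the paper's definition of $\Omega$ that is needed for consistency with $t(L)=2^{m-1}$ and with the count $|\Omega|=n/2$ used later in Lemma~\ref{four}.
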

\begin{proof}
The assertion of the lemma follows from Lemma \ref{Spectrum}.

\end{proof}

\begin{lem}\label{independent}
$\Omega$ is a maximal independent set of $L$.
\end{lem}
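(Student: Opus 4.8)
The plan is to verify the two defining properties of a maximal independent set separately: independence will be read off from Lemma~\ref{svyz}(iii), and maximality will be immediate once we observe that $|\Omega|$ equals the value $t(L)=2^{m-1}$ obtained in the previous lemma.

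First I would record the elementary facts about $\Omega$. Here $p=2$, $\prk(L)=n=2^m\geq4$, and $\varphi(r,L)=e(r,2)$ for every prime $r\neq2$; the $2^{m-1}$ indices $i$ occurring in $\Omega$ all lie in the interval $(2^{m-1},2^m]=(n/2,n]$. Since $m>5$ we have $2^{m-1}\geq32>6$, so Zsigmondy's theorem supplies $r_i=r_i(2)$ for each such $i$; each $r_i$ divides $2^i-1$, hence is odd, and as $i\leq n$ it divides $|GL_n(2)|=|L|$, so $r_i\in\pi(L)\setminus\{2\}$. Finally, distinct indices give distinct primes, because a primitive prime divisor $r$ of $2^i-1$ satisfies $e(r,2)=i$; thus $\Omega$ is a set of $2^{m-1}$ distinct odd primes of $\pi(L)$ with $\varphi(r_i,L)=i$.

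Next I would prove independence. Let $r_i,r_j\in\Omega$ be distinct. Since $n/2<\varphi(r_i,L)\leq n$ and $n/2<\varphi(r_j,L)\leq n$ and $\prk(L)\geq4$, Lemma~\ref{svyz}(iii) applies and tells us that $r_i$ and $r_j$ are adjacent in $GK(L)$ if and only if $e(r_i,2)=e(r_j,2)$, i.e.\ if and only if $i=j$. As $i\neq j$, we get $r_ir_j\notin\omega(L)$, so $\Omega$ is independent. Maximality is then free: an independent set of $L$ has at most $t(L)=2^{m-1}=|\Omega|$ elements, so any independent set containing $\Omega$ coincides with $\Omega$.

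The step I expect to need the most care is placing the range of indices correctly, namely strictly above $n/2$. If $r_{2^{m-1}}$ were included in $\Omega$ together with $r_{2^m}$, then $\Omega$ would not be independent: $2^{m-1}\mid2^m$ forces $r_{2^{m-1}}\mid2^{2^m}-1$, and $r_{2^m}\mid2^{2^m}-1$ as well, while $2^{2^m}-1$ is the order of a Singer cycle of $L_{2^m}(2)$ and hence lies in $\omega(L)$, so $r_{2^{m-1}}r_{2^m}\in\omega(L)$. Conversely, if one prefers the range $[2^{m-1},2^m)$, then the non-adjacency of $r_{2^{m-1}}$ with the other primes of $\Omega$ is no longer covered by Lemma~\ref{svyz}(iii) and has to be extracted directly from Lemma~\ref{Spectrum}: a product $r_{2^{m-1}}r_j$ with $2^{m-1}<j<2^m$ would have to divide one of the numbers in items (1)--(6) for $L_{2^m}(2)$, and a short case analysis (item (1) fails since $j\nmid2^m$; in items (2),(3),(5) a summand divisible by $2^{m-1}$ must already equal $2^{m-1}$, leaving no summand large enough to be divisible by $j$; items (4),(6) are powers of $2$) rules this out. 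Apart from this boundary bookkeeping, the argument is a direct application of the two cited lemmas.
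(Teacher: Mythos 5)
Your argument is correct, but it routes through a different key lemma than the paper does. The paper's proof is the single line ``follows from Lemma~\ref{Spectrum}'', i.e.\ a direct verification against Buturlakin's explicit description of $\omega(L_n^\varepsilon(q))$: one checks that no number of types (1)--(6) is divisible by $r_ir_j$ for distinct $i,j$ in the relevant range, and that every prime outside $\Omega$ is adjacent to some element of $\Omega$. You instead derive independence from the adjacency criterion of Lemma~\ref{svyz}(iii) (all indices lie in $(n/2,n]$, so adjacency forces $e(r_i,2)=e(r_j,2)$, i.e.\ $i=j$) and get maximality for free from the preceding lemma $t(L)=2^{m-1}$. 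This is cleaner and less computational, at the cost of importing the general machinery of \cite{VasBig} and of leaning on $t(L)=2^{m-1}$, which the paper also only asserts from Lemma~\ref{Spectrum}; your closing sketch of the case analysis on items (1)--(6) is essentially what the paper's one-line proof is silently invoking.

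One point you raise deserves emphasis rather than being buried as ``boundary bookkeeping'': as literally written, $\Omega=\{r_{2^{m-1}},r_{2^{m-1}+1},\dots,r_{2^m}\}$ has $2^{m-1}+1$ elements, which already contradicts $t(L)=2^{m-1}$, and indeed $r_{2^{m-1}}r_{2^m}$ divides $2^{2^m}-1\in\omega(L)$, so the set as defined is \emph{not} independent. Your diagnosis is right: the intended index range is $2^{m-1}<i\leq 2^m$, and your proof is a proof of that corrected statement. Stating the correction explicitly at the outset (rather than proving independence for the interval $(n/2,n]$ in paragraph two while the definition still includes $i=2^{m-1}$) would make the writeup internally consistent.
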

\begin{proof}
The assertion of the lemma follows from Lemma \ref{Spectrum}.

\end{proof}
\begin{lem}\label{2spek}
$2^m\in\omega(L)$ and $2^{m+1}\not\in\omega(L)$.
\end{lem}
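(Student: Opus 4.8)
The plan is to read off both claims from Lemma~\ref{Spectrum}, applied to the linear group $L_n(q)$ (the case $\varepsilon=+1$) with $n=2^m$ and $q=p=2$; here $d=(n,q-1)=1$, so that $L=SL_{2^m}(2)=GL_{2^m}(2)$. Since $2^m$ and $2^{m+1}$ are themselves powers of $2$, any element witnessing membership of one of them in $\omega(L)$ lies in a Sylow $2$-subgroup of $L$, i.e.\ is unipotent. Hence only the contributions of Lemma~\ref{Spectrum} that carry a factor $p^k$ — items~(4), (5), (6) — can be relevant, and of those only the size of the $2$-part matters.

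For $2^m\in\omega(L)$ I would either invoke item~(4) with $k=m$ and $n_1=2^{m-1}-1$ (so that $p^{k-1}+1+n_1=n$ holds and the corresponding element order is divisible by $p^k=2^m$), or, equivalently and more transparently, exhibit a regular unipotent element $u=I+N$ with $N$ a single nilpotent Jordan block of size $n=2^m$. Over a field of characteristic $2$, since $I$ and $N$ commute, $u^{2^k}=I+N^{2^k}$, so $u^{2^k}=I$ exactly when $2^k\ge n$; thus $|u|=2^{\lceil\log_2 n\rceil}=2^m$, giving $2^m\in\omega(L)$.

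For $2^{m+1}\notin\omega(L)$ I would bound the $2$-part of every element order. A $2$-element of $GL_{2^m}(2)$ is unipotent, hence conjugate to a direct sum of Jordan blocks of sizes $j_1\ge j_2\ge\cdots$ with $\sum_i j_i=n=2^m$; by the computation above its order is $2^{\lceil\log_2 j_1\rceil}\le 2^{\lceil\log_2 2^m\rceil}=2^m$. In the language of Lemma~\ref{Spectrum} this says that in items~(4)--(6) the $p$-part is $p^k$ with a relation of the form $p^{k-1}+1+n_1+\cdots=n$ (or $p^{k-1}+1=n$), hence $p^{k-1}+1\le n$, and since $n=2^m$ this forces $2^{k-1}<2^m$, i.e.\ $k\le m$; items~(1)--(3) contribute only numbers coprime to $2$. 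So no element order is divisible by $2^{m+1}$.

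I do not expect a genuine obstacle here; the only point needing a moment's care is the coupling, in items~(4) and~(5), of the exponent $k$ to the parameters $n_1,\dots,n_s$ through $p^{k-1}+1+n_1+\dots+n_s=n$. But all $n_i$ are positive, so already $p^{k-1}<n=2^m$, giving $k\le m$ in every case. Thus the lemma is an immediate corollary of Lemma~\ref{Spectrum} once one uses that the dimension $n$ is itself a power of the characteristic.
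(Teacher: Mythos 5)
Your proof is correct and follows the same route as the paper, which simply states that the lemma follows from Lemma~\ref{Spectrum}; you have supplied the details (the $2$-part of any element order in items (4)--(6) is $2^k$ with $2^{k-1}+1\le n=2^m$, hence $k\le m$, with equality attained by a regular unipotent element). The Jordan-block computation is a sound, self-contained verification of the same fact.
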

\begin{proof}
The assertion of the lemma follows from Lemma \ref{Spectrum}.
\end{proof}

\begin{lem}\label{spectr}
If $g\in G$ and $\pi(g)\subseteq\Omega$, then $|\pi(g)|\leq 3$.
\end{lem}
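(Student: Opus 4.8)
The plan is to exploit the hypothesis $\omega(G)=\omega(R)$ together with Lemma~\ref{independent}, which tells us that $\Omega$ is an independent set of $L$, i.e.\ $pq\notin\omega(L)$ for all distinct $p,q\in\Omega$. The underlying observation is that every element order of the direct cube $R=L^3$ is a least common multiple of three element orders of $L$, so it can be divisible by at most three primes of $\Omega$; pulling this back to $G$ via the equality of spectra yields the claim. No Zsigmondy-type or parabolic-subgroup analysis is needed here, in contrast to the lemmas that follow.

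Concretely, I would argue by contradiction. Suppose $g\in G$ satisfies $\pi(g)\subseteq\Omega$ and $|\pi(g)|\geq 4$, and fix four pairwise distinct primes $p_1,p_2,p_3,p_4\in\pi(g)$. Since each $p_i$ divides $|g|$ and the $p_i$ are pairwise coprime, the product $p_1p_2p_3p_4$ divides $|g|$, and hence $p_1p_2p_3p_4\in\omega(G)=\omega(R)$. Because $R=L\times L\times L$, an element of $R$ has order $[d_1,d_2,d_3]$ for suitable $d_1,d_2,d_3\in\omega(L)$; thus there exist $d_1,d_2,d_3\in\omega(L)$ with $p_1p_2p_3p_4\mid[d_1,d_2,d_3]$.

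A pigeonhole step then finishes the argument. Each of the four primes $p_i$ divides $[d_1,d_2,d_3]$, hence divides at least one of $d_1,d_2,d_3$; so two of the four primes, say $p_a$ and $p_b$ with $a\neq b$, divide the same $d_k$. Then $p_ap_b\mid d_k\in\omega(L)$, so $p_ap_b\in\omega(L)$, contradicting the fact from Lemma~\ref{independent} that $\Omega$ is an independent set of $L$ (note $p_a,p_b$ are distinct members of $\Omega$ as $\pi(g)\subseteq\Omega$). Therefore $|\pi(g)|\leq 3$.

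I do not expect any real obstacle: the entire content is the reduction $\omega(G)=\omega(R)=\omega(L^3)$ combined with independence in a single factor, and the only quantitative input is that $3<4$ forces a collision under the pigeonhole principle.
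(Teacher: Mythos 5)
Your proof is correct and is exactly the argument the paper has in mind — the paper itself dismisses this lemma with ``The proof is trivial,'' and your pigeonhole argument (element orders of $L^3$ are lcm's of three element orders of $L$, each of which carries at most one prime of $\Omega$ by Lemma~\ref{independent}) is the intended filling-in of that gap.
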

\begin{proof}
The proof is trivial.
\end{proof}

\section{Proof Theorem}

Let $L=L_{n}(2)$, where $n=2^m$ and $m>5$, $R=L^3$. Suppose that $G$ is a group such that $\omega(G)=\omega(R)$.

Denote $1<G_1<G_2<... <G_r=G$ is a principal series of $G$, $S_i=G_i/G_{i-1}$.

\begin{lem}\label{qk}
$\omega(G)$ does not contain $q^{n/2-1}-1$, where $q$ is odd.
\end{lem}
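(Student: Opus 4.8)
The plan is to replace $G$ by $L^{3}$ throughout: since $\omega(G)=\omega(R)=\omega(L^{3})$, it suffices to prove that $q^{n/2-1}-1\notin\omega(L^{3})$ for every odd $q$. For any finite groups $H_{1},H_{2},H_{3}$ the spectrum $\omega(H_{1}\times H_{2}\times H_{3})$ is exactly the set of divisors of the numbers $[a_{1},a_{2},a_{3}]$ with $a_{i}\in\omega(H_{i})$; consequently the assumption $q^{n/2-1}-1\in\omega(L^{3})$ is equivalent to saying that the set of prime divisors of $q^{n/2-1}-1$ can be partitioned into three (possibly empty) parts so that, for each part, the product of the corresponding maximal prime powers of $q^{n/2-1}-1$ belongs to $\omega(L)$. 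I would assume this and look for a contradiction.

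The first ingredient is the shape of $\omega(L)$. Applying Lemma~\ref{Spectrum} with $\varepsilon=+$, $q=2$ and $d=(n,1)=1$, every element of $\omega(L)$ divides a number of the form $2^{k}\,[\,2^{k_{1}}-1,\dots,2^{k_{s}}-1\,]$ with $k\ge0$, $k_{i}\ge1$ and $2^{k-1}+1+\sum_{i}k_{i}\le n$ (read in the obvious way when $k=0$); in particular every element of $\omega(L)$ is strictly smaller than $2^{n}$, and by Lemma~\ref{2spek} its $2$-part is at most $2^{m}$. Writing $q^{n/2-1}-1=d_{1}d_{2}d_{3}$ with pairwise coprime $d_{i}\in\omega(L)$ as above, we get $q^{n/2-1}-1<2^{3n}$, hence $q<2^{\,6n/(n-2)}<2^{7}$ because $n=2^{m}\ge64$. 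This disposes of all but finitely many values of $q$.

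For the remaining (finitely many) odd $q$ with $3\le q<2^{7}$ I would turn to primitive prime divisors. Since $n/2-1=2^{m-1}-1\ge31>6$, Zsigmondy's lemma gives, for every divisor $e>1$ of $n/2-1$, a primitive prime divisor $r_{e}(q)$ of $q^{e}-1$, and these primes are pairwise distinct. As $n/2-1$ is odd, every primitive prime divisor of $q^{n/2-1}-1$ is congruent to $1$ modulo $n-2$ (where $n-2=2(n/2-1)$), hence is at least $n-1$; more generally the prime divisors of the cyclotomic factor $\Phi_{n/2-1}(q)$ of $q^{n/2-1}-1$ (the value at $q$ of the $(n/2-1)$st cyclotomic polynomial) are, with at most one exception, of this kind. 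On the other hand, every prime divisor of $q^{n/2-1}-1$ must lie in $\pi(L)=\pi(L_{n}(2))$, i.e.\ must divide $2^{j}-1$ for some $j\le n$, so has order at most $n$ modulo $2$; and, by Lemma~\ref{Spectrum} together with Lemmas~\ref{independent} and~\ref{spectr}, the primes assigned to one of the three factors have orders modulo $2$ that must be coverable by a partition of an integer not exceeding $n$, while at most three of them altogether can lie in the independent set $\Omega$. The contradiction is then obtained by confronting the number (and size) of the prime divisors of $q^{n/2-1}-1$ — equivalently, of the cyclotomic factors $\Phi_{e}(q)$ for $e\mid n/2-1$, each of whose prime divisors forces a block of a definite minimal length into one of the three partitions — with the total available length $3n$.

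I expect this last step to be the real obstacle. For large $q$ the crude estimate $q^{n/2-1}-1<2^{3n}$ is amply sufficient, but for the small admissible values of $q$ the numbers $q^{n/2-1}-1$ lie far below the largest element of $\omega(L^{3})$, so the contradiction has to be extracted from the arithmetic of $q^{n/2-1}-1$ itself: one must show, uniformly in $m\ge6$, that the prime divisors of $q^{n/2-1}-1$ (which lie in pairwise different sets $R_{i}(2)$ and are each $\equiv1\pmod{n-2}$) cannot simultaneously all have order $\le n$ modulo $2$ and be packed into three partitions of $[1,n]$. Carrying out this packing-and-congruence bookkeeping carefully — in particular ruling out the borderline configurations in which several prime divisors happen to have small order modulo $2$ — is where the bulk of the work lies.
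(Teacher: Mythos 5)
Your proposal does not close the argument: after the (correct) reduction to $q<2^{7}$ via the bound $q^{n/2-1}-1=d_{1}d_{2}d_{3}<2^{3n}$, everything that remains is the case you yourself flag as ``the real obstacle,'' and the packing-and-congruence bookkeeping you sketch there is never carried out --- nor is it clear it could be carried out uniformly in $m$, since it would require controlling the actual prime factorizations of $q^{n/2-1}-1$ for each small odd $q$ and every $m>5$. As it stands, the proof is incomplete precisely where the content lies.

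The idea you are missing is that the statement is killed by a single $2$-adic valuation computation, with no case split on $q$ at all. The paper factors
\[
q^{2^{m-1}}-1=(q-1)(q+1)(q^{2}+1)(q^{4}+1)\cdots(q^{2^{m-2}}+1);
\]
since $q$ is odd, $(q-1)(q+1)$ is divisible by $2^{3}$ and each of the $m-2$ factors $q^{2^{i}}+1$ with $i\geq1$ is divisible by $2$, so the $2$-part of this number is at least $2^{m+1}>n$. On the other hand, by Lemma~\ref{2spek} no element of $\omega(L)$ is divisible by $2^{m+1}$, and since the $2$-part of $\operatorname{lcm}(a_{1},a_{2},a_{3})$ equals $\max_{i}(a_{i})_{2}$, no element of $\omega(L^{3})=\omega(G)$ is divisible by $2^{m+1}$ either: the three direct factors give no extra room at a fixed prime. (Note that the paper's factorization is that of $q^{n/2}-1$, not of $q^{n/2-1}-1$: the exponent $n/2-1=2^{m-1}-1$ is odd, so for the literal statement the $2$-part is only $(q-1)_{2}$ and the valuation argument would fail; the exponent in the statement is evidently a misprint for $n/2$, which is also what the application in Lemma~\ref{fak} requires, since there one needs $q^{r}-1$ for an even $r$.) You actually had the key fact in hand --- your second paragraph observes that the $2$-part of any member of $\omega(L)$ is at most $2^{m}$ --- but you used it only as a crude size bound instead of applying it to the prime $2$ alone, which already finishes the proof.
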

\begin{proof}
We have $q^{n/2-1}-1 =(q-1)(q+1)(q^2 + 1). . .(q^{2m-2}+ 1)$. If follows that $(q^{n/2-1}-1)_2>n$; a contradiction with Lemma \ref{2spek}.
\end{proof}

\begin{lem}\label{fak}
Let $S$ be a composition factor of $G$. If $S$ is a group of Lie type over a field of odd characteristic, then Lie rank of $S$ does not exceed $n/2+3$.
\end{lem}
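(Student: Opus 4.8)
My plan is to argue by contradiction: suppose $S$ is a composition factor of $G$ of Lie type in odd characteristic $v$, over a field of order $u=v^f$, with $\prk(S)=k$ too large, and derive an element order in $\omega(G)$ that is incompatible with the structural constraints already established (principally Lemma~\ref{2spek} via Lemma~\ref{qk}, and the bound $t(L)=2^{m-1}$). The key point is that large Lie rank forces $S$ to contain an element of order $u^{k-\text{const}}-1$ (a maximal torus element of a suitable subsystem subgroup), and since $\omega(S)\subseteq\omega(G)=\omega(R)=\omega(L)$, such an order must already lie in $\omega(L)=\omega(L_{2^m}(2)^3)=\omega(L_{2^m}(2))$.

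First I would record the relevant lower-bound mechanism for element orders in classical and exceptional groups of Lie type: if $\prk(S)=k$ then $S$ has a maximal torus whose order is divisible by $u^{k'}-1$ for $k'$ roughly $k$ (precisely, by Lemma~\ref{Spectrum} when $S$ is linear/unitary, and by the analogous well-known torus descriptions otherwise, or simply by using that $S$ contains $L_{k'}(u)$ or $L^{\varepsilon}_{k'}(u)$ as a subgroup for $k'$ close to $k$). Writing $q=u$ (odd), this yields $q^{k'}-1\in\omega(S)$ with $k'\ge n/2-1$ as soon as $k>n/2+3$. Then I would invoke Lemma~\ref{qk}: $\omega(G)$ does not contain $q^{n/2-1}-1$ for $q$ odd, and since $q^{n/2-1}-1$ divides $q^{k'}-1$ when $(n/2-1)\mid k'$ — or, more robustly, since any multiple of $q^{n/2-1}-1$ of the form $q^{k'}-1$ with $k'\ge n/2-1$ has $2$-part exceeding $n$ by the same computation $(q^{k'}-1)_2=(q-1)_2(q+1)_2\cdots\ge (q^{n/2-1}-1)_2>n$ whenever $k'\ge n/2-1$ — we contradict Lemma~\ref{2spek}. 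This reduces matters to ruling out $n/2-1\le k'$, i.e.\ to bounding $k'$, hence $k$, by $n/2+3$; the additive slack of $3$ is there precisely to absorb the difference between $\prk(S)$ and the size of the linear/unitary subgroup one extracts (e.g.\ $L_m(u)\le O^{\pm}_{2m}(u), Sp_{2m}(u), U_m(u)$, and the small corrections for exceptional types $E_6,E_7,E_8,F_4,G_2$, which have bounded rank anyway and so are handled trivially once $n=2^m>64$).

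The main obstacle, and the part needing care, is the uniform extraction of an element of order $u^{k'}-1$ with $k'$ genuinely close to $\prk(S)$ across all Lie types, together with tracking the exact constant so that the threshold comes out as $n/2+3$ rather than something slightly different. For the classical types this is a direct reading of the torus structure (or of Lemma~\ref{Spectrum} for $L^{\varepsilon}_n(q)$ and its analogue for the other families), and the characteristic-$v$ elements play no role since we only use semisimple torus elements coprime to $v$. The exceptional groups are not really an obstacle — their Lie rank is at most $8$, far below $n/2+3$ once $m>5$ — so they may be dismissed in one line. I would therefore organize the write-up as: (1) dispose of exceptional types by the rank bound; (2) for a classical group $S=S(u)$ of large rank $k$, exhibit the subgroup (or torus) giving $u^{k'}-1\in\omega(S)$ with $k'\ge k-2$; (3) combine with $k>n/2+3$ to get $k'\ge n/2-1$, hence $q^{n/2-1}-1\mid q^{k'}-1\in\omega(G)$ (or directly the $2$-part estimate), contradicting Lemma~\ref{qk}/Lemma~\ref{2spek}.
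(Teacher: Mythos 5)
Your overall strategy --- extract a semisimple element of order $q^{r}-1$ from a maximal torus of $S$ and contradict Lemma \ref{qk} --- is the same as the paper's, but your final step (3) contains a genuine error. The $2$-part of $q^{r}-1$ is governed by the $2$-adic valuation of $r$, not by its size: writing $r=2^{a}b$ with $b$ odd and $q$ odd, one has $(q^{r}-1)_2=(q-1)_2(q+1)_2\,2^{a-1}$ for $a\geq 1$, and $(q^{r}-1)_2=(q-1)_2$ for $a=0$. Hence the claim that $(q^{k'}-1)_2\geq(q^{n/2-1}-1)_2>n$ whenever $k'\geq n/2-1$ is false: for odd $k'$ the $2$-part is just $(q-1)_2$, and in fact $n/2-1=2^{m-1}-1$ is itself odd, so $(q^{n/2-1}-1)_2=(q-1)_2$ need not exceed $n$ either (the factorization displayed in the proof of Lemma \ref{qk} is that of $q^{n/2}-1$, whose exponent is a $2$-power; that is the order actually being excluded). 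Likewise the divisibility $q^{n/2-1}-1\mid q^{k'}-1$ requires $(n/2-1)\mid k'$, which is not implied by $k'\geq n/2-1$. So producing a single torus element of order $u^{k'}-1$ with $k'\geq k-2$ does not suffice: nothing forces that particular $k'$ to have the right arithmetic, and the contradiction does not follow.

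The paper's proof avoids exactly this trap: from the description of maximal tori it extracts $q^{r}-1\in\omega(S)$ for \emph{every even} $r\leq k-1$, and then one takes $r=n/2=2^{m-1}$ (available since $k-1\geq n/2$), for which $(q^{n/2}-1)_2=(q-1)_2(q+1)_2\cdot 2^{m-2}\geq 2^{m+1}>n$, contradicting Lemma \ref{2spek}. To repair your write-up you need the whole family of torus orders (or at least the specific exponent $r=n/2$), not merely one exponent close to the rank; as written, step (2) gives you no control over the $2$-part of $k'$, which is the only thing that matters for the concluding estimate.
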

\begin{proof}
Assume that $G$ includes a composition factor $S\simeq\Lambda_k(q)$, where $\Lambda_k(q)$ is a group of Lie type, $k>n/2+2$.

From the description of maximal tories (see \cite{car1, car2}) it follows that $q^{r}-1\in\omega(S)$ for each even $r\leq k-1$. The assertion of lemma follows from Lemma \ref{qk}.







\end{proof}

\begin{lem}\label{ryd}
Let $\Lambda=\{i_1, i_2, ..., i_k\}$ be a set of numbers such that there exists $p_j\in\pi(S_j)\cap(\Omega\setminus\{p_i| i\in\Lambda \setminus\{j\}\})$, where $j\in \Lambda$. Then $|\Lambda|\leq 8$.
\end{lem}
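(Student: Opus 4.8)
The plan is to argue by contradiction: suppose $|\Lambda|\geq 9$. The hypothesis gives, for each $j\in\Lambda$, a prime $p_j\in\pi(S_j)\cap\Omega$ which is distinct from all the other chosen primes $p_i$ ($i\in\Lambda$), so in particular $p_{i_1},\dots,p_{i_k}$ are $k$ pairwise distinct primes all lying in $\Omega$, each one sitting in a \emph{different} composition factor $S_j$ of $G$. The key point is that primes living in distinct composition factors of a principal series behave ``independently'' with respect to the spectrum in a controlled way: I would build, inside $G$, an element whose order is divisible by too many of the $p_j$, contradicting Lemma \ref{spectr} (which says any element of $G$ with prime divisors in $\Omega$ has at most $3$ such divisors), \emph{unless} the structure is constrained enough to invoke the solvable bound $\gamma(3)=8$ of Lemma \ref{sigmas}.

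More precisely, I would proceed as follows. First, fix the chosen primes $p_{i_1},\dots,p_{i_k}$ and, reindexing, assume $i_1<i_2<\dots<i_k$. For each $j$, the factor $S_j=G_j/G_{j-1}$ has $p_j$ in its prime spectrum; since $\omega(G_1)\subseteq\omega(G)$ for appropriate sections by Lemma \ref{Zav}, and more to the point since an element of a composition factor can be lifted (through the solvable-by-not machinery, or directly when the factor is abelian) to an element of $G$ of the same order on that ``coordinate,'' one gets for each chain of factors a corresponding product of coprime prime orders realized in $G$. Concretely, choosing a Hall-type or Sylow-type argument coordinate by coordinate, the numbers $p_{i_1}p_{i_2}\cdots p_{i_t}$ for suitable subsets should be forced into $\omega(G)$, and since all $p_j\in\Omega$ this already violates Lemma \ref{spectr} once $t\geq 4$ — so the only way $|\Lambda|$ can exceed $3$ at all is that the relevant quotient through which these primes act is solvable, and then $\sigma$ of that quotient is at most $\gamma(3)=8$ by Lemma \ref{sigmas}. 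That yields $|\Lambda|\leq 8$.

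The main obstacle — and the step requiring real care — is the lifting: from the mere fact that $p_j\mid|S_j|$ one does not immediately get an element of order $p_j$ in $G$ whose cyclic factor survives in a product with the $p_i$ from the other factors, because non-split extensions and the action of upper factors on lower ones can collapse element orders. I would handle this exactly as in the $\gamma$-type arguments: pass to the semidirect-product model $NK$ using Lemma \ref{Zav} to control spectra downward, and use the fact that the primes $p_j\in\Omega$ are large (they are primitive prime divisors $r_i(2)$ with $2^{m-1}\le i\le 2^m$, hence coprime to the relevant small numbers) so that Lemmas \ref{action} and \ref{fact} apply to guarantee the products are realized; whenever the action fails to produce a large product we are in the degenerate Fermat-prime/nonabelian case of Lemma \ref{fact}, which can only happen for $p=2$ and is incompatible with $p_j\in\Omega$ odd. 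Once that realization step is in place, the counting argument caps $|\Lambda|$ at $\gamma(3)=8$, completing the proof.
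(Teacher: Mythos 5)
You have correctly identified the two endpoints of the argument --- Lemma \ref{spectr}, which caps at $3$ the number of primes from $\Omega$ dividing a single element order, and Keller's bound $\gamma(3)=8$ from Lemma \ref{sigmas} --- but the bridge between them is missing. The paper's proof runs as follows: by the Frattini argument applied along the principal series one obtains a subgroup $H\leq G$ with $\pi(H)=\Sigma=\{p_j\mid j\in\Lambda\}$; since every prime in $\Omega$ is odd, $|H|$ has odd order and is therefore solvable by the Feit--Thompson theorem; Lemma \ref{spectr} then gives $\sigma(H)\leq 3$, so if $|\Sigma|>8$ Lemma \ref{sigmas} would force $H$ to contain an element whose order is divisible by at least $4$ primes of $\Sigma\subseteq\Omega$, a contradiction. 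Your proposal never actually produces such a subgroup (the ``Hall-type or Sylow-type argument coordinate by coordinate'' gestures at it but is not carried out), and, more importantly, it gives the wrong source for solvability: you present solvability as a fallback triggered by the failure of a lifting argument, whereas it is immediate from the fact that $H$ has odd order.

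The lifting argument itself is not salvageable in the form you describe. The claim that the products $p_{i_1}\cdots p_{i_t}$ are ``forced into $\omega(G)$'' once $t\geq4$ is false in general --- if it were true, the bound in the lemma would be $3$ rather than $8$, and there would be no need for Keller's theorem at all; the whole point is that element orders can collapse across the extensions, and the only surviving control is the solvable bound $\gamma(3)=8$. Moreover, Lemmas \ref{action} and \ref{fact} concern faithful actions of particular groups on vector spaces of coprime characteristic and are not applicable to primes distributed over distinct principal factors, so they cannot repair the lifting step. The missing idea is precisely the Frattini construction of an odd-order (hence solvable) $\Sigma$-subgroup, to which Lemmas \ref{spectr} and \ref{sigmas} are then applied.
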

\begin{proof}
The proof is similar to that \cite[Lemma 1.1]{Vas05}. Let $\Sigma=\{p_j|j\in \Lambda\}|$, where $p_j$ as from the hypothesis of the lemma.
Assume that $|\Sigma|>8$. Using Frattini's argument, it is easy to obtain that $G$ includes a subgroup $H$ such that $\pi(H)= \Sigma$. Since $2\not\in\pi(H)$, the group $H$ is solvable. Lemma \ref{sigmas} implies that $H$ contains an element $g$ whose order is divisible by $4$ different numbers from the set $\Sigma$; a contradiction.

\end{proof}

Let $\Lambda=\{i_1, i_2, ..., i_k\}$ be a maximal set of indices such that there exits a set $\Sigma\leq\Omega$ and $p_j\in\pi(S_{i_j})$, where $p_j\in \Sigma$, $i_j\in \Lambda$. It follows from Lemma \ref{ryd}  that $|\Lambda|\leq 8$.


\begin{lem}\label{Simpleryd}
The set $\{S_i|i\in\Lambda\}$ contains at most $3$ nonsolvable groups.
\end{lem}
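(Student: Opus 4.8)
The plan is to use the three relevant structural facts about $R = L^3$ with $L = L_{2^m}(2)$: first, its independence number is $t(L^3) = 3t(L) = 3\cdot 2^{m-1}$, coming from three disjoint copies of the independent set $\Omega$; second, by Lemma \ref{spectr} any element whose prime set lies inside one copy of $\Omega$ has at most $3$ prime divisors, and more importantly three primes from three different copies of $\Omega$ are pairwise non-adjacent in $GK(R)$; third, each simple section $S_{i_j}$ with $i_j\in\Lambda$ contributes at least one prime $p_j\in\Sigma\subseteq\Omega$, and these $p_j$ are pairwise distinct. First I would argue that a \emph{nonsolvable} section $S_i$ with a prime $p_i\in\Omega$ must in fact be a group of Lie type whose order is divisible by a whole Zsigmondy class $R_{e}(2)$ for the relevant $e\in\{2^{m-1},\dots,2^m\}$; combining Lemma \ref{svyz} and Lemma \ref{l:adjanisotrop}, such a section then forces \emph{many} primes of $\Omega$ into $\pi(S_i)$ that are pairwise adjacent in $GK(S_i)$, hence adjacent in $GK(G)=GK(R)$.

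Next I would show that if there were $4$ nonsolvable sections among $\{S_i\mid i\in\Lambda\}$, each absorbing a prime of $\Omega$, then one can select primes $q_1,q_2,q_3,q_4\in\Omega$ with $q_j\in\pi(S_{i_j})$ and, using the adjacency produced above together with the fact that primes sitting in different composition factors of $G$ need not be adjacent, derive a clique of size $4$ inside one copy of $\Omega$ — contradicting Lemma \ref{spectr}, which caps at $3$ the number of pairwise-adjacent primes of $\Omega$ inside any single copy. The delicate point is that the four sections could be spread across the three direct factors of $R$, so the four primes need not all lie in one copy of $\Omega$; to handle this I would use a pigeonhole step: among four nonsolvable sections, at least two lie ``above'' the same direct factor (more precisely, project $G$ onto each of the natural quotients realizing the $GK$-structure of the three factors), and then the Zsigmondy-class argument of Lemma \ref{l:adjanisotrop} forces a clique that is too large for that single factor $L$, where $t(L)$ and the structure from Lemma \ref{Spectrum} give the bound $3$.

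The main obstacle I anticipate is precisely the bookkeeping of \emph{which} copy of $\Omega$ a given section's prime belongs to, and ensuring that the adjacency relations manufactured inside each nonsolvable section $S_i$ genuinely transfer to $GK(R)$ rather than being killed by the fact that $S_i$ is only a section of $G$; this is where I would lean on Lemma \ref{action} and Lemma \ref{fact} to control the extension problem (a faithful action of a Frobenius-type subgroup on a module in the ``gap'' between consecutive composition factors produces the required element orders), exactly as in the cited \cite[Lemma 1.1]{Vas05} and \cite{VasBig} arguments. Once the transfer is established, the counting is short: each nonsolvable section eats at least, say, two or three pairwise-adjacent primes of one fixed copy of $\Omega$, three such sections already exhaust one copy, and a fourth would overflow some copy past the clique bound $3$, giving the contradiction and hence $|\{S_i\mid i\in\Lambda\}\cap\{\text{nonsolvable}\}|\le 3$.
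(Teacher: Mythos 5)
Your proposal goes off track at the very first step: the contradiction you are aiming for is not the one Lemma \ref{spectr} provides. Lemma \ref{spectr} says that an element $g\in G$ with $\pi(g)\subseteq\Omega$ satisfies $|\pi(g)|\leq 3$; it is a statement about element orders, not about cliques in the prime graph. In fact $\Omega$ is already a clique of size $2^{m-1}$ in $GK(R)$ (any two primes $p,q\in\Omega$ satisfy $pq\in\omega(L\times L)\subseteq\omega(R)$, by taking the two primes in different direct factors), so ``deriving a clique of size $4$ inside $\Omega$'' contradicts nothing. Relatedly, there are no ``three disjoint copies of $\Omega$'': $\Omega$ is a single fixed set of primitive prime divisors $\{r_{2^{m-1}},\dots,r_{2^m}\}$, and the bound $3$ in Lemma \ref{spectr} comes from the fact that an element of $L^3$ is a product of three commuting elements, each lying in one direct factor and each contributing at most one prime of $\Omega$ because $\Omega$ is independent in $L$. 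Your entire bookkeeping of ``which copy of $\Omega$ a prime belongs to'' therefore has no referent, and your auxiliary claim that a nonsolvable section meeting $\Omega$ must be of Lie type and absorb a whole Zsigmondy class is both unjustified and unnecessary.

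The idea actually needed, and the one the paper uses, is to manufacture a genuine element of order $p_ap_bp_cp_d$ with all four primes in $\Omega$. Given four nonsolvable principal factors $S_a,S_b,S_c,S_d$ ($a<b<c<d$) each contributing a distinct prime of $\Omega$, one uses the Schreier conjecture (outer automorphism groups of simple groups are solvable) to see that a simple subgroup of $G/G_c$ acts trivially modulo inner automorphisms on a simple direct factor $C_1$ of $S_c$; hence $G/G_{c-1}$ contains a subgroup isomorphic to $C_1\times D_1$ with $D_1$ covering a simple factor of $S_d$, and so contains an element of order $p_cp_d$. Iterating downward through $S_b$ and $S_a$ yields an element of order $p_ap_bp_cp_d$, which violates Lemma \ref{spectr}. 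Your proposal never supplies a mechanism for combining primes living in \emph{different} composition factors into a single element order --- Lemmas \ref{action} and \ref{fact}, which you invoke for the ``transfer,'' concern faithful actions on modules and are used elsewhere in the paper for a different purpose --- so the argument as outlined cannot be completed.
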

\begin{proof}
Let $S_a, S_b, S_c, S_d$ be nonsolvable groups such that $a<b<c<d$ and $a,b,c,d\in \Lambda$. Let us show that in this case there is an element $g\in G$ such that $\pi(g)= p_ap_bp_cp_d$, where $ p_a, p_b, p_c, p_d $ are distinct numbers from $\Omega$. Since $S_c$ is the principal factors of $G$, then $S_c \simeq C_1 \times C_2 \times ... \times C_r$, where $C_i$ are isomorphic simple groups. The automorphism group of any simple group is solvable. Therefore, any simple subgroup of $G /G_{c}$ acts trivially on the diagonal subgroup of $S_c$. Hence $\widetilde {G} = G / G_ {c-1} $ includes a subgroup $ H \simeq C_1 \times D_1$, where $D_1$ is the minimal preimage of a minimal normal subgroup of the group $S_d$. In particular, $H$ contains an element of order $p_cp_d$. Similarly, we see that $G$ contains an element of order $p_ap_bp_cp_d$; a contradiction with Lemma \ref{spectr}.

\end{proof}

Let $\overline{\Lambda}\leq\Lambda$ be such that $S_i$ is nonsolvable, where $i\in \overline{\Lambda}$. It follows from Lemma \ref{Simpleryd} that $|\overline{\Lambda}|\leq 3$.

\begin{lem}\label{four}
There exists $j\in\overline{\Lambda}$ such that $|(\pi(S_j)\cap \Omega)|\geq 9$.
\end{lem}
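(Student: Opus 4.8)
The plan is a counting argument that exploits how large $\Omega$ is compared with the bounds already obtained. First I would note that $|\Omega|=2^m-2^{m-1}+1=2^{m-1}+1\geq 33$, since $m\geq 6$ forces every index $i$ with $2^{m-1}\leq i\leq 2^m$ to exceed $6$, so by Zsigmondy each $r_i(2)$ with such $i$ exists, and distinct indices yield distinct primes. Moreover every prime of $\Omega$ lies in $\pi(L)=\pi(R)=\pi(G)$, and hence divides $|G|=\prod_i|S_i|$.

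The main point is to upgrade this to the statement that every prime of $\Omega$ divides $|S_i|$ for some $i\in\Lambda$. Suppose some $r\in\Omega$ divided $|S_i|$ for no $i\in\Lambda$. Since $r$ divides $|G|$, there is an index $i_0\notin\Lambda$ with $r\mid |S_{i_0}|$. For each $j\in\Lambda$ the prime $p_j$ divides $|S_{i_j}|$ with $i_j\in\Lambda$, so $p_j\neq r$; hence the enlarged index set $\Lambda\cup\{i_0\}$, together with the enlarged set of primes $\{p_j:j\in\Lambda\}\cup\{r\}\subseteq\Omega$, satisfies the defining condition of $\Lambda$ but is strictly larger, contradicting the maximality of $\Lambda$. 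Therefore $\Omega\subseteq\bigcup_{i\in\Lambda}\pi(S_i)$, so
$$|\Omega|\ \leq\ \sum_{i\in\Lambda}|\pi(S_i)\cap\Omega|.$$

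Next I would separate the solvable from the nonsolvable factors among $\{S_i:i\in\Lambda\}$. For $i\in\Lambda\setminus\overline{\Lambda}$ the factor $S_i$ is a solvable chief factor, hence elementary abelian, so $|\pi(S_i)\cap\Omega|\leq 1$; together with $|\Lambda|\leq 8$ this gives
$$33\ \leq\ |\Omega|\ \leq\ (|\Lambda|-|\overline{\Lambda}|)+\sum_{i\in\overline{\Lambda}}|\pi(S_i)\cap\Omega|\ \leq\ 8-|\overline{\Lambda}|+\sum_{i\in\overline{\Lambda}}|\pi(S_i)\cap\Omega|,$$
so $\sum_{i\in\overline{\Lambda}}|\pi(S_i)\cap\Omega|\geq 25+|\overline{\Lambda}|$. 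If $|\pi(S_i)\cap\Omega|\leq 8$ held for every $i\in\overline{\Lambda}$, the left-hand side would be at most $8|\overline{\Lambda}|$, whence $7|\overline{\Lambda}|\geq 25$ and $|\overline{\Lambda}|\geq 4$, contradicting $|\overline{\Lambda}|\leq 3$. Hence $|\pi(S_j)\cap\Omega|\geq 9$ for some $j\in\overline{\Lambda}$, which is the assertion.

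I expect the only place requiring genuine care to be the maximality step: one must verify that the prime $r$ used to enlarge $\Lambda$ really is distinct from all the previously chosen $p_j$, and this is exactly what the hypothesis that $r$ divides no $|S_i|$ with $i\in\Lambda$ supplies. The rest is elementary bookkeeping with the inequalities $|\Lambda|\leq 8$, $|\overline{\Lambda}|\leq 3$, and the lower bound $|\Omega|\geq 33$.
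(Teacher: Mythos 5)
Your argument is correct and is essentially the paper's own proof: the author likewise uses $|\Lambda|\leq 8$, the fact that solvable chief factors contribute at most one prime of $\Omega$ each (leaving at least $25$ primes for the nonsolvable factors), and $|\overline{\Lambda}|\leq 3$ to force one factor to absorb at least $9$ primes. Your explicit verification that maximality of $\Lambda$ yields $\Omega\subseteq\bigcup_{i\in\Lambda}\pi(S_i)$ is a detail the paper leaves implicit, but the route is the same.
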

\begin{proof}
We have $|\Omega|=n/2\geq 32$. Since the order of each solvable principal factor is divisible by at most one number from $|\Omega|$, we have at least $25$ numbers from $\Omega$ divide orders of nonsolvable principal factors of $G$. It follows from Lemma \ref{Simpleryd} that $G$ contains at most $3$ nonsolvable principal factors with this satisfies this conditions. Therefore, order of at least one of them is divisible by $9$ numbers from $\Omega$ 

\end{proof}

\begin{lem}\label{good}

There exists $H\lhd G$ such that $X=Soc(G/H)=A_1\times A_2\times A_3$, where $A_i$ is a nonabelian simple group and $|\pi(A_i)\cap\Omega|>3$, for $1\leq i\leq 3$.

\end{lem}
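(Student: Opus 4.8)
The plan is to combine the structural information already obtained about the nonsolvable principal factors with the divisibility and adjacency constraints on $\Omega$. By Lemma \ref{four} there is some $j_0\in\overline{\Lambda}$ with $|\pi(S_{j_0})\cap\Omega|\geq 9$; since $S_{j_0}\simeq C_1\times\dots\times C_r$ is a principal factor with $C_i$ isomorphic nonabelian simple, and every element of $C_i$ of order a product of two primes from $\Omega$ would force such an element in $S_{j_0}$ and hence (arguing as in Lemma \ref{Simpleryd}, using that diagonal subgroups give elements of larger prime support) in $G$, I would first check that each $C_i$ can contribute only few primes from $\Omega$ to a single-element order, and that $r$ cannot be too large, so in fact one single simple factor $A$ of $S_{j_0}$ satisfies $|\pi(A)\cap\Omega|>3$. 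Then I would re-run the counting argument of Lemma \ref{four}, now at the level of simple factors rather than principal factors: the $25$-or-more primes of $\Omega$ dividing orders of nonsolvable principal factors are distributed among the simple composition factors occurring in those principal factors, each principal factor $S_i$ ($i\in\overline\Lambda$) being a direct power of a simple group, and Lemma \ref{Simpleryd} caps the number of relevant principal factors at $3$.

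Next I would set $H$ to be a suitable normal subgroup of $G$ so that $G/H$ has socle exactly $A_1\times A_2\times A_3$ with the $A_i$ the (up to three) simple groups whose orders are each divisible by more than three primes of $\Omega$. Concretely, take the product of the principal factors that carry these $A_i$ and collapse everything below; more carefully, I would pass to $\overline G=G/G_{t}$ for the largest index $t$ below the lowest relevant principal factor, observe that the relevant nonsolvable principal factors appear as a section, and use that the outer automorphism group of a simple group is solvable (already invoked in Lemma \ref{Simpleryd}) to arrange that the minimal normal subgroups generated by these factors centralize each other and form a direct product $A_1\times A_2\times A_3$ inside $\mathrm{Soc}(G/H)$. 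The point that there are exactly three (not one or two) is where Lemma \ref{spectr} enters in the other direction: if only one or two simple factors absorbed all the $\Omega$-primes with multiplicity $>3$, then by Lemma \ref{svyz} (or Lemma \ref{l:adjanisotrop}) there would be two primes of $\Omega$ adjacent in the prime graph of that single factor — but pairs of primes in $\Omega$ are nonadjacent in $\omega(G)$ by Lemma \ref{independent} together with Lemma \ref{spectr} — unless the factor is itself large enough (Lie rank $\geq n/2$-ish), and Lemma \ref{fak} together with Lemma \ref{qk} restricts the odd-characteristic possibilities, forcing the $\Omega$-primes to be spread over three factors each of which is essentially a copy of $L_n(2)$-type size.

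In more detail, the key chain is: (1) upgrade Lemma \ref{four} from principal factors to simple factors, so some simple composition factor $A_1$ has $|\pi(A_1)\cap\Omega|>3$; (2) remove $A_1$ and the primes it accounts for, note at least $9$ further $\Omega$-primes remain on nonsolvable principal factors, and repeat to get $A_2$; (3) repeat once more for $A_3$, which is still possible because $|\Omega|=n/2\geq 32$ and each $A_i$ of bounded type can only account for a bounded-below-$n$ fraction of $\Omega$ by Lemma \ref{svyz}(ii)-(iv); (4) check that $A_1,A_2,A_3$ live in distinct principal factors or, if in the same principal factor, in distinct direct factors thereof, so that the corresponding minimal normal subgroups of $G/H$ commute and multiply to a direct product; (5) choose $H$ so that these three are precisely the socle of $G/H$, discarding any additional small simple factors into $H$. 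The main obstacle I anticipate is step (1)–(3): making the counting genuinely work requires a uniform upper bound, for an arbitrary nonabelian simple group $S$ of Lie rank $\leq n/2+3$ (by Lemma \ref{fak}) or of bounded rank/exceptional/alternating/sporadic type, on the number of primes of $\Omega$ that can divide $|S|$ while \emph{no two} of them are adjacent in $\Gamma(S)$; Lemma \ref{svyz} gives this for classical groups of large rank (the nonadjacent primes must have $\varphi$-values in $(n/2,n]$ and pairwise distinct $e(r,q)$, so there are at most $n/2$ of them, matching $|\Omega|$), and the remaining cases are handled by order comparisons, but assembling these into the clean statement "$>3$ on each of exactly three factors" is the delicate bookkeeping.
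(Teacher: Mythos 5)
Your outline goes wrong at its load-bearing step, the iterative counting in (1)--(3). You propose to extract $A_1$, ``remove the primes it accounts for,'' and argue that at least $9$ further $\Omega$-primes remain to produce $A_2$ and then $A_3$, on the grounds that ``each $A_i$ of bounded type can only account for a bounded-below-$n$ fraction of $\Omega$ by Lemma \ref{svyz}(ii)--(iv).'' There is no such bound: Lemma \ref{svyz} constrains \emph{adjacency} in the prime graph, not membership in $\pi(\cdot)$, and a single simple group can perfectly well contain every prime of $\Omega$ in its order --- $L_n(2)$ itself does, with all these primes pairwise nonadjacent. So after extracting $A_1$ there may be nothing left, and the induction never produces $A_2$ and $A_3$. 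Relatedly, your stated reason that one or two factors cannot suffice --- ``pairs of primes in $\Omega$ are nonadjacent in $\omega(G)$ by Lemma \ref{independent} together with Lemma \ref{spectr}'' --- is false: for distinct $p,q\in\Omega$ we have $pq\in\omega(L\times L)\subseteq\omega(R)=\omega(G)$; Lemma \ref{independent} is a statement about a single copy of $L$, not about $R$. Two $\Omega$-primes adjacent inside one simple factor is not by itself a contradiction.

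The lever the paper actually uses is the opposite pair of facts: every \emph{triple} of distinct primes from $\Omega$ occurs as an element order in $\omega(R)$, while no quadruple does (Lemma \ref{spectr}). The paper fixes $H=G_{j-1}$ at the first principal factor rich in $\Omega$-primes, shows that the set $\Delta$ (all but at most $8$ primes of $\Omega$) divides only the socle $X=A_1\times\dots\times A_k$ of $G/H$, so every element of order $pql$ with $p,q,l\in\Delta$ must be realized inside $X$. Then $k\le 3$, since four factors each rich in $\Omega$-primes would yield a four-prime element order; and $k\in\{1,2\}$ is excluded because the triples would then force one simple factor to contain at least $|\Delta|-1$ of these primes with \emph{all pairs adjacent} in its prime graph, and the classification case analysis (alternating, sporadic, exceptional, classical over odd and even fields, via Lemmas \ref{qk}, \ref{fak}, \ref{L8}, \ref{svyz}) shows no simple group supports such a clique on primes that are primitive prime divisors $r_i(2)$ with $i>n/2$. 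Your proposal gestures at several of these ingredients but inverts the adjacency logic and replaces the one correct counting argument with one that cannot be made to work; as written it does not establish the lemma.
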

\begin{proof}
By Lemma \ref{four} there exists a principal factor $S$ of $G$ such that $|\pi(S)\cap\Omega|>3$. Let $j$ be a minimal number such that $|\pi(S_j)\cap\Omega|>3$. 

Let $X$ be a socle of $\widetilde{G}=G/G_{j-1}$. Put $\Delta= \Omega\cap(\pi(X)\setminus(\pi(G_{j-1})\cup\widetilde{G}/X))$.
We can assume that $X$ is a direct product of simple groups $A_1,..., A_k$ and the order of each of them is divisible by at least $4$ numbers from $\Omega$. Therefore, $k\leq 3$. We have $\widetilde{G}\leq Aut(X)$. Since the group of outer automorphisms of a simple group is solvable, it follows that $\widetilde{G}/X$ is solvable.

For each $p, q, l \in\Delta$ the group $X$ contains an element of order $pql$.
Assume that $G_{j-1}$ includes a nonsolvable composition factor whose order is divisible by $t\in\Omega$. Using Lemma \ref{Simpleryd} it is easy to see that in this case $G$ contain an element of order $pqrt$; a contradiction.
We have $|\Delta|\geq|\Omega|-8+k$. 

Assume that $X$ is a simple group.
Assume that $X\simeq Alt_t$. In this case easy to see that $|\Delta|\leq \lambda(t/3,t/4)<\frac{t}{12}$, where $\lambda(a, b)$ is a number of primes in the interval $[a,b]$. Therefore $t>2n+2$ and $2n\in\omega(X)$; a contradiction.

Assume that $X$ is isomorphic one of the sporadic simple groups. Since $\Delta$ contains a number greater than $100$, a contradiction.

Using the information about orders of the maximal tori of exceptional groups of Lie type \cite{car2}, \cite{Der}, we get that $X$ does not isomorphic to an exceptional group of Lie type.


Assume that $X$ is isomorphic to $\Lambda_t(q)$, a classical simple group of Lie type and $prk(\Lambda_t(q))=t$ over a field of order $q$. Assume that there exists $i$ such that $p,r\in R_i(q)$, for some $p,r\in \Delta$. Then if $ar\in \omega(X)$, then from Lemma \ref{L8} follows that $arp\in \omega(X)$, where $a\in\pi(X)\setminus\{p,r\}$. Let $v,w\in\Delta\setminus \{p,r\}$. Therefore $prvw\in\omega(X)$; a contradiction. Therefore if $p\in R_i(q)$ and $r\in R_j(q)$, then $i\neq j$. From the fact that for each $p,r\in \Delta$ we have $pr\in\omega(X)$ and Lemma \ref{svyz} it follows that $\varphi(s,X)<t/2$ for each $s\in \Delta$ except one. It follows from Lemma \ref{four} that $\Delta\geq n/2-9$. Therefore $t/2\geq n/2-9$. Hence $t\geq n-18>n/2+3$. Assume that $q$ is odd. It follows from Lemma \ref{fak} that $t\leq n/2+2$; a contradiction. 

Assume that $q=2^l$. In this case, it is easy to show that $R_i(2)\subseteq \pi(X)$, where $i>n$; a contradiction.


Assume that $X=A_1\times A_2$. We have that $|\pi(A_1)\cap \Delta|>3$ and $|\pi(A_2)\cap \Delta|>3$. Suppose there are numbers $p, r, t, s \in\Delta$ such that $pr\in\omega(A_1)$ and $ts\in\omega(A_2)$. Then $prts\in\omega(X)$; a contradiction. Therefore, $\omega(A_1)$ includes $pr$ for each $p,r\in \Delta\cap\pi(A_1)$ and $|\pi(A_1)\cap\Delta|\geq|\Delta|-1$. Similarly, as above, we get a contradiction.


\end{proof}

We fix the following notation $X$, $H$ and $A_1, A_2, A_3$ are such as in Lemma \ref{good}, $\widetilde{G}=G/H$.
Let $\Delta=\Omega\cap(\pi(X)\setminus(\pi(H)\cup\widetilde{G}/X))$.


\begin{lem}\label{coco}
The following statements are true:
\begin{enumerate}
\item For each distinct $p,q\in\Delta$ we have $pq\not\in\omega(A_i)$, where $1\leq i\leq 3$.

\item $\Delta\subset\pi(A_i)$ for each $1\leq i\leq 3$.
\end{enumerate}
\end{lem}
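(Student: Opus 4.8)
The plan is to argue by contradiction, using throughout that $\omega(X)\subseteq\omega(\widetilde{G})\subseteq\omega(G)=\omega(L^3)$. Two facts are the engine. First, since $\Omega$ is an independent set of $L$ (Lemma~\ref{independent}), any element of $L^3$ whose order is divisible only by primes of $\Omega\supseteq\Delta$ has at most three prime divisors (one per direct factor), so by Lemma~\ref{spectr} the same holds in $G$ and hence in $X$: no element of $X$ has order divisible by four distinct primes of $\Delta$. Second, for any pairwise distinct $p,q,l\in\Delta$ the group $X$ contains an element of order $pql$: indeed $pql\in\omega(L^3)=\omega(G)$, a preimage of that order meets $H$ trivially (as $\pi(pql)\cap\pi(H)=\varnothing$) and maps to an element of $\widetilde{G}$ of order $pql$ lying in $X$ (as $\pi(pql)\cap\pi(\widetilde{G}/X)=\varnothing$). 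I will also use that $|\Delta|\geq|\Omega|-8+3$ is of size roughly $n/2$, in particular $\geq n/2-9$.

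For part (1), suppose $pq\in\omega(A_1)$ for some distinct $p,q\in\Delta$ (the cases of $A_2,A_3$ are symmetric). First I would note that at most one of the $A_i$ can contain an element of order divisible by two distinct primes of $\Delta$: if $A_2$ also did, say with primes $p',q'$, then $\operatorname{lcm}(pq,p'q')\in\omega(X)$ forces $|\{p,q,p',q'\}|\leq 3$, and multiplying by a prime of $\Delta$ dividing $|A_3|$ and lying outside $\{p,q,p',q'\}$ produces an element of $X$ with four primes of $\Omega$. So $A_2$ and $A_3$ have no two-$\Delta$-prime element, and it then suffices to find $s\in\pi(A_2)\cap\Delta$ and $t\in\pi(A_3)\cap\Delta$ with $s\neq t$ and $\{s,t\}\cap\{p,q\}=\varnothing$: the element of $X$ of order $pqst$ has four primes of $\Omega$, a contradiction. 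Such $s,t$ can fail to exist only if $\pi(A_2)\cap\Delta$ or $\pi(A_3)\cap\Delta$ is essentially contained in $\{p,q\}$, and then the corresponding factor is a simple group with $|\pi(A_i)\cap\Omega|>3$ but carrying only boundedly many primes of $\Omega$; I would exclude this by re-running the case analysis from the proof of Lemma~\ref{good}, via Lemmas~\ref{fak}, \ref{svyz}, \ref{Spectrum}, the $2$-part bound of Lemma~\ref{2spek}, and Zsigmondy's theorem.

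For part (2), part (1) shows that in any element $(a_1,a_2,a_3)\in X$ of order $pql$ with $\{p,q,l\}\subseteq\Delta$ the three primes lie in three distinct coordinates; writing $c(x)=\{i:x\mid|A_i|\}$ for $x\in\Delta$, this means $c(p),c(q),c(l)$ have a system of distinct representatives for every triple. A short combinatorial argument — no colour $\{i\}$ can occur twice, no colour $\{i,j\}$ three times, or some triple would fail to have such a system — then bounds $\{x\in\Delta:c(x)\neq\{1,2,3\}\}$ by $9$, so $F=\{x\in\Delta:c(x)=\{1,2,3\}\}$ has size $\geq|\Delta|-9$, still of order $n/2$. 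Thus each $A_i$ has, by part (1), an independent set of size of order $n/2$ consisting of primitive prime divisors $r_j(2)$ with $j\in[n/2,n]$; the classification of finite simple groups (odd characteristic is excluded by Lemmas~\ref{fak} and \ref{2spek}; alternating, sporadic and exceptional groups by their bounded invariants together with Lemma~\ref{2spek}; symplectic, orthogonal and unitary groups in characteristic $2$ because their independence number is too small relative to $n$) forces $A_i\cong L_{t_i}(2)$ with $t_i$ close to $n$. The remaining — and hardest — step is to upgrade this to $t_i=n$, i.e. to $\Delta\subseteq\pi(A_i)$: for this I would use that $\omega(G)$ equals, not merely contains, $\omega(L^3)$, so that $G$, and via the preimage argument $X$, must realize every product $r_i(2)r_j(2)r_k(2)$ of three distinct primes of $\Omega$ with the three primes sitting in three distinct factors; recovering the exact ranks $t_i$ from this, while keeping $H$ and the solvable outer part $\widetilde{G}/X$ under control, is where the main obstacle lies.
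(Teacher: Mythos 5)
Your part~(1) reaches the right contradiction --- an element of $X=A_1\times A_2\times A_3$ whose order is divisible by four distinct primes of $\Omega$, against Lemma~\ref{spectr} --- but by an unnecessary detour. The paper's argument is one line: if $pq\in\omega(A_1)$ with $p,q\in\Delta$, pick $l\in\pi(A_2)\cap(\Delta\setminus\{p,q\})$ and $t\in\pi(A_3)\cap(\Delta\setminus\{p,q,l\})$; these exist because the proof of Lemma~\ref{good} already guarantees that each $|A_i|$ is divisible by at least four primes of $\Delta$, so there is nothing to ``re-run'' from that case analysis. Your preliminary claim that at most one $A_i$ can have a two-$\Delta$-prime element is not needed, and as stated it even fails in the degenerate case $\{p',q'\}=\{p,q\}$, where $\operatorname{lcm}(pq,p'q')$ has only two prime divisors and appending one prime from $A_3$ gives only three.

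The genuine gap is in part~(2). Your SDR argument only yields that all but at most $9$ primes of $\Delta$ divide every $|A_i|$, and you then try to close the distance by first pinning down the isomorphism types $A_i\simeq L_{t_i}(2)$ --- which is the content of the \emph{later} Lemma~\ref{Class} and of the lemma identifying $A_i\simeq L_{2^m}(2)$, whose proofs themselves use the present lemma --- and you explicitly concede that the final step ``is where the main obstacle lies''. So the proposal does not prove statement~(2), and the route chosen is circular as well as incomplete. The paper's argument is direct and never touches the classification: for any distinct $p,q,r\in\Delta$ one has $pqr\in\omega(X)$ (a preimage in $G$ of an element of order $pqr$ lands in $X$ because $p,q,r$ avoid $\pi(H)$ and $\pi(\widetilde{G}/X)$, exactly as you set up in your preamble), and by part~(1) each coordinate of such an element carries exactly one of $p,q,r$; hence every $3$-subset of $\Delta$ meets $\pi(A_i)$ for each $i$, which is how the paper concludes $\Delta\subset\pi(A_i)$. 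If you insist on your order of argument you must resolve the circularity you yourself flag; as written, part~(2) is not proved.
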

\begin{proof}
Assume that there exists distinct $p,q\in \Delta$ such that $pq\in\omega(A_1)$. In this case $\omega(X)$ includes $pqlt$, where $l\in\pi(A_2)\cap(\Delta\setminus\{p,q\})$ and $t\in\pi(A_3)\cap(\Delta\setminus\{p,q,l\})$; a contradiction. Therefore, if $p,q\in\pi(A_1)\cap\Delta$ then $pq\not\in \omega(A_1)$.

Since $pqr\in\omega(X)$, for a distinct numbers $p,q,r\in \Delta$, we have that $\Delta\in \pi(A_i)$, where $1\leq i\leq 3$.


\end{proof}

\begin{lem}\label{Class}
The $A_i$ is a classical group of Lie type over a field of even characteristic, where $1\leq i\leq 3$.
\end{lem}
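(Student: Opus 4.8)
Fix $i\in\{1,2,3\}$. By Lemma~\ref{coco} the set $\Delta$ is contained in $\pi(A_i)$ and is an independent set of $A_i$; moreover $A_i$ is a composition factor of $G$, being one of the composition factors of $X=Soc(G/H)$, and $|\Delta|\ge n/2-5\ge 27$ by Lemma~\ref{good} and its proof. Throughout we use the elementary fact, already employed in the proof of Lemma~\ref{fak}, that if a prime power $\ell$ lies in $\omega(S)$ for a composition factor $S$ of $G$, then $\ell\in\omega(G)$ (lift a generator of a cyclic subgroup of order $\ell$ along a composition series of $G$); together with Lemma~\ref{2spek} this shows that no composition factor of $G$ contains an element of order $2^{m+1}$. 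The plan is to run, for each $A_i$, the same case analysis as in the proof of Lemma~\ref{good}, but now exploiting that $\Delta$ is a \emph{large independent} set of $A_i$ rather than a large set of pairwise adjacent primes.

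\emph{Alternating and sporadic groups.} Suppose $A_i\simeq Alt_t$. If distinct $p,q\in\Delta$ satisfied $p+q\le t$, then $A_i$ would contain a product of disjoint cycles of lengths $p$ and $q$, i.e. an element of order $pq$; so independence forces $p+q>t$ for all distinct $p,q\in\Delta$, whence all but at most one prime of $\Delta$ lies in $(t/2,t]$ and $\pi(t)-\pi(t/2)\ge|\Delta|-1$. By the prime number theorem this makes $t$ large; in particular $t>2^{m+1}+1$ for $m\ge 6$. Then $Alt_t$ contains an even permutation of order $2^{m+1}$ (a $2^{m+1}$-cycle times a disjoint transposition), contradicting the remark above. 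Suppose $A_i$ is a sporadic simple group. Since $\Delta$ has at least $27$ distinct primes, it contains a prime larger than $100$, while no sporadic simple group has order divisible by a prime exceeding $71$; this contradicts $\Delta\subseteq\pi(A_i)$.

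\emph{Groups of Lie type.} If $A_i$ is an exceptional group of Lie type, then its independence number is bounded by an absolute constant well below $27$ (this follows from the orders of the maximal tori of exceptional groups, exactly as in the proof of Lemma~\ref{good}), contradicting $|\Delta|\ge 27$. Hence $A_i\simeq\Lambda_t(q)$ is classical of Lie type with $\prk(A_i)=t$. Assume the characteristic is odd. By Lemma~\ref{fak}, $t\le n/2+3$, and in particular $t\ge 4$ (for $t\le 3$ the group $A_i$ is one of finitely many small classical groups, all of independence number far below $27$). Applying Lemma~\ref{svyz} to the independent set $\Delta$: by part~(ii) at most one $r\in\Delta$ has $\varphi(r,A_i)\le t/2$, and by parts~(i),~(iii),~(iv) the remaining at least $|\Delta|-2$ primes satisfy $t/2<\varphi(r,A_i)\le t$ and have pairwise distinct values $e(r,q)$. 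Counting, type by type, the admissible values of $e(r,q)$ in that window gives a bound of the form $|\Delta|\le\tfrac34 t+O(1)$, hence $t\ge n/2$ for $n\ge 64$. If $t>n/2+3$ this already contradicts Lemma~\ref{fak}; otherwise $n/2\le t\le n/2+3$, and a maximal torus of $A_i$ with a direct factor of order $q^{n/2}-1$ yields an element of that order in $A_i$. Since $(q^{n/2}-1)_2=(q^2-1)_2\cdot 2^{m-2}\ge 2^{m+1}$, this produces an element of order $2^{m+1}$ in $A_i$, again a contradiction.

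Thus for each $i$ the only surviving possibility is that $A_i$ is a classical group of Lie type over a field of even characteristic, which proves the lemma. The main obstacle is the odd-characteristic classical case: one must turn the purely combinatorial fact that $\Delta$ is a large independent set of primes in $A_i$ into the arithmetic statement that $\prk(A_i)$ is comparable with $n$. This requires careful bookkeeping with the adjacency criteria of Lemma~\ref{svyz}, in particular with the way $\varphi(\cdot,A_i)$ --- and the difference between $e(r,q)$ and $e(r,\varepsilon q)$ in the unitary case --- distributes the primes of $\Delta$ among the classes $R_e(q)$; for symplectic and orthogonal groups the resulting count is slightly weaker, and the borderline ranks $t\in\{n/2,\dots,n/2+3\}$ must be eliminated using the $2$-part of $q^{n/2}-1$ together with Lemma~\ref{2spek}.
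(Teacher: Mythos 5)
Your proposal is correct and follows essentially the same route as the paper: use the fact that $\Delta$ is a large independent subset of $\pi(A_i)$ to exclude sporadic and exceptional groups, exclude alternating groups by prime counting plus an element of order $2^{m+1}=2n$, and exclude odd-characteristic classical groups by forcing the rank to be at least $n/2$ and then exhibiting an element of order $q^{n/2}-1$, whose $2$-part exceeds $2^m$ (this is exactly the content of Lemma~\ref{qk}, which the paper invokes at this point). You merely fill in details the paper leaves to the citation of the spectra tables, e.g.\ deriving the rank bound from Lemma~\ref{svyz} instead of from \cite{VV}.
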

\begin{proof}
From the fact that $\Delta\in\pi(A_i)$ and for each distinct $p,q\in\Delta$ the spectrum of $A_i$ does not contain $pq$ see Lemma \ref{coco}, it follows that $A_i$ does not isomorphic one of sporadic groups or exceptional group of Lie type.

Suppose that $A_i$ is isomorphic to an alternating group of degree $l$. Then the interval $[l/2, l]$ contains at least $n/2-7$ primes. From the description of the distribution of primes it follows that $l>2n+2$. Therefore, $A_i$ contains an element of order $2n$; a contradiction.


Suppose that $A_i$ is isomorphic to a simple classical group of Lie type $\Lambda_{n'}(q)$ over a field of odd order $q$. To obtain a contradiction with Lemma \ref{qk}, it is sufficient to show that $\omega(\Lambda_{n'}(q))$ contains an element of order $q^{n/2}-1 $. The estimate for the number $n'$ will be obtained from the fact that the $t(A_i)$ is greater than or equal to $n/2-7$. From the description of the spectra of finite simple groups \cite{VV}, we obtain the required statement.

\end{proof}

\begin{lem}\label{suma}
$\Omega=\Delta$.
\end{lem}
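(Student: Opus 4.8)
The plan is to show the reverse inclusion $\Omega \subseteq \Delta$, since $\Delta \subseteq \Omega$ holds by definition. Recall $\Delta = \Omega \cap \bigl(\pi(X) \setminus (\pi(H) \cup \pi(\widetilde{G}/X))\bigr)$, so a prime $r \in \Omega$ fails to lie in $\Delta$ precisely when $r \notin \pi(X)$, or $r \in \pi(H)$, or $r \in \pi(\widetilde{G}/X)$. I would rule out each of these three obstructions in turn, using that by Lemma~\ref{four} we already know $|\Delta| \geq n/2 - 9$, and by Lemma~\ref{Class} each $A_i$ is a classical group of Lie type in even characteristic with $t(A_i) \geq n/2 - 7$, hence $\operatorname{prk}(A_i)$ is large (roughly $\geq n - 18$ by the argument already used inside Lemma~\ref{good}). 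The key leverage is that $\Omega$ consists of primitive prime divisors $r_i(2)$ for consecutive $i$ in the range $[2^{m-1}, 2^m]$, and that each such $r_i$ is a \emph{large} prime — in particular coprime to $6 \cdot 2 \cdot (q+1)$ for $q$ a power of $2$ once $i$ is large enough — so Lemmas~\ref{action} and~\ref{fact} apply cleanly.

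\textbf{Eliminating $\pi(\widetilde{G}/X)$ and $\pi(H)$.} Since $\widetilde{G}/X$ embeds in $\prod \mathrm{Out}(A_i)$, it is solvable, and for a classical group in even characteristic $|\mathrm{Out}(A_i)|$ is divisible only by small primes (the diagonal, field, and graph parts), each of which is far below $2^{m-1}$; hence no $r \in \Omega$ divides $|\widetilde{G}/X|$, so the third obstruction never occurs. For $H$: suppose $r \in \Omega$ divides $|H|$. Then $r$ divides the order of some composition factor of $H$. If that factor is a nonabelian simple group, I invoke Lemma~\ref{Simpleryd} together with $|\Delta \cap \pi(A_i)| > 3$ for each $i$ (Lemma~\ref{coco}) to build an element whose prime set is a four-element subset of $\Omega$, contradicting Lemma~\ref{spectr} — this is exactly the argument sketched in the first paragraph of the proof of Lemma~\ref{good}. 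If instead the relevant composition factor is cyclic of order $r$, then $H$ has a normal section $T$ that is an elementary abelian $r$-group on which $X$ (or a section of it) acts; using Lemma~\ref{Zav} to pass to the split extension and then Lemma~\ref{action} (legitimate since $r$ is coprime to $6 \cdot 2 \cdot (q_i+1)$ and $r^1 \in \omega(P)$ for a parabolic $P$ of $A_i$ by Lemma~\ref{l:adjanisotrop}, as $r = r_j(2)$ with $j \neq$ the anisotropic index), I get an element of order $2 \cdot r \cdot (\text{a product of two more primes of } \Omega$ from the other two factors$)$, again a four-element subset of $\Omega$ times possibly $2$ — and since $|\pi(g)| \le 3$ forces a contradiction once one accounts correctly, I will need to be slightly careful to produce a genuinely forbidden element; the cleanest route is to produce $r p q \in \omega(X)$ with $p \in \pi(A_2) \cap \Delta$, $q \in \pi(A_3) \cap \Delta$ already in $\omega(X)$, combined with a prime from $A_1$ via the parabolic action, giving four primes of $\Omega$.

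\textbf{Eliminating $r \notin \pi(X)$.} This is the main obstacle. Here $r \in \Omega$ but $r \nmid |X| = |A_1||A_2||A_3|$; equivalently, for each $i$, $r$ is not a primitive prime divisor active in $A_i$, i.e. the index $j_i := e(r, q_i)$ (or its $\eta$-value) exceeds $\operatorname{prk}(A_i)$ or the relevant torus index. I would argue that since $|\Delta| \geq n/2 - 9$ and $\Delta \subseteq \pi(A_i)$ with the primes of $\Delta$ pairwise nonadjacent in $A_i$, Lemma~\ref{svyz} forces the $\varphi(p, A_i)$ for $p \in \Delta$ to be $n/2 - 9$ distinct values all at most $\operatorname{prk}(A_i)$, with at most one exceeding $\operatorname{prk}(A_i)/2$; combined with $\operatorname{prk}(A_i) \le n/2 + 3$ in the odd-characteristic-excluded case — but characteristic is even, so here I instead use that $q_i = 2^{l_i}$ and $r_i(2) \in \Omega$ means $i \in [2^{m-1}, 2^m]$, and $e(r, q_i) = i / \gcd(i, l_i)$-type relations pin down $\operatorname{prk}(A_i)$ to be essentially $n$, forcing $A_i$ to contain $r_i(2)$ for \emph{all} $i$ in the window, hence $\Omega \subseteq \pi(A_i)$ and so $\Omega \subseteq \pi(X)$. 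Once $r \in \pi(X)$, the previous paragraph's exclusions of $\pi(H)$ and $\pi(\widetilde{G}/X)$ finish the argument, giving $r \in \Delta$ and therefore $\Omega \subseteq \Delta$, completing the proof. The delicate point throughout is bookkeeping the four distinct primes of $\Omega$ in each contradiction so that Lemma~\ref{spectr} genuinely applies.
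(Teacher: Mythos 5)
Your overall skeleton (rule out $\pi(H)\cap\Omega\neq\varnothing$ and $\pi(\widetilde{G}/X)\cap\Omega\neq\varnothing$, in the first case by producing an element whose order involves four primes of $\Omega$ via the action of parabolic subgroups on an elementary abelian section, which is what the paper does with Lemmas \ref{l:adjanisotrop} and \ref{fact}) is the right one, but there are two genuine problems. First, your dismissal of $\pi(\widetilde{G}/X)$ on the grounds that $|\Out(A_i)|$ is ``divisible only by small primes'' is unjustified: at this stage $A_i$ is only known to be a classical group over a field of order $2^{l}$ (Lemma \ref{Class}), and the field-automorphism part of $\Out(A_i)$ is cyclic of order $l$, which a priori can be divisible by a prime $h>2^{m-1}$. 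This is exactly the case the paper has to work to exclude: if $h\in\Omega$ divides $l$, then $l> n/2$, and then $\pi(A_i)$ contains a primitive prime divisor $r_j(2)$ with $j>n$, which does not lie in $\pi(R)$ --- that argument (or something equivalent) is missing from your proposal and cannot be waved away.

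Second, your third paragraph treats ``$r\notin\pi(X)$'' as the main obstacle and spends the bulk of the effort on it, but this case is vacuous: since $\omega(G)=\omega(R)$ we have $\Omega\subseteq\pi(G)=\pi(H)\cup\pi(X)\cup\pi(\widetilde{G}/X)$, so once the first two obstructions are eliminated every $r\in\Omega$ automatically lies in $\pi(X)$, hence in $\Delta$. This matters not just for economy: the argument you actually give there (that the relations $e(r,q_i)$ ``pin down $\prk(A_i)$ to be essentially $n$, forcing $A_i$ to contain $r_i(2)$ for all $i$ in the window'') is a chain of unsubstantiated assertions, not a proof, so if that case really did need an argument your write-up would not supply one. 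Replace that paragraph with the one-line observation above, and repair the $\Out(A_i)$ step as indicated, and the proof closes along the paper's lines.
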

\begin{proof}
Assume that there exists $h\in\pi(H)\cap\Omega$. Let $W<H$ be a normal subgroup of $G$ of maximal order such that $|H/W|$ is a multiple of $h$. Put $\overline{H}=H/W$, $Y\unlhd\overline{H}$ is a minimal normal in $G$ subgroup, $\overline{G}=G/W$. We can assume that $|\overline{G}/Y|$ is not divisible by numbers from $\Omega$ and $C_{\overline{H}}(Y)=Y$. Since $|\Delta|> 27$, by Lemma \ref{l:adjanisotrop} there exists pairwise distinct numbers $p_1, p_2, p_3 \in \Delta$ such that $p_i\in \pi P_i $, where $P_i <A_i $ is a parabolic subgroup. Let $a_i\in P_i$ be an element of order $p_i$, where $1\leq i\leq 3$. We have $\overline{G}$ contains the subgroup $V.(P_1\times P_2 \times P_3)$. Since $|V|$ is not divisible by numbers from $\Omega$, using Lemma \ref{fact}, we obtain that $\overline{G}$ contains an element of order $hp_1p_2p_3$; a contradiction.
Thus, $|H|$ is not divisible by numbers from $\Omega$.

Suppose that $|\widetilde{G}/X| $ is divisible by some number $h$ from $\Omega$. Then there exists $1\leq i\leq3$ such that $h\in \pi (Out (A_i)$. Since $h>32$ and the characteristic of the task field of the group $A_i$ is equal $2$, then $A_i$ contains the field automorphism of order $h$. Let $k=2^t$ be the order of the field over which the group $A_i$ is given. Then $h$ divides $t$, in particular, $t>n/2$. In this case it is easy to show that $A_i$ contains a primitive prime divisor of the number $2^{r}-1$, where $r>n$; a contradiction.
Thus, $|H|$ and $ |\widetilde{G}/X| $ are not divisible by numbers from $\Omega$ and therefore $\Delta=\Omega$.

\end{proof}

\begin{lem}
$A_i\simeq L_{2^m}(2)$, where $1\leq i\leq3$.
\end{lem}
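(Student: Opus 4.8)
The plan is to pin down each $A_i$ as a classical group in characteristic $2$ (already known from Lemma~\ref{Class}), determine its Lie rank from the independence number, and then use the precise structure of $\omega(L^3)$ — in particular the facts about $2$-element orders (Lemma~\ref{2spek}) and the set $\Omega=\Delta$ (Lemma~\ref{suma}) — to force the field of definition to be $\mathbb{F}_2$ and the type to be linear of degree $2^m$. First I would record that, by Lemmas~\ref{good}, \ref{Class} and \ref{suma}, each $A_i$ is a simple classical group over $\mathbb{F}_{2^{l_i}}$ with $\Delta=\Omega\subseteq\pi(A_i)$ and with no two primes of $\Omega$ adjacent in $GK(A_i)$. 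Writing $A_i=\Lambda_{n_i}(2^{l_i})$ with $\prk(A_i)=n_i$, apply Lemma~\ref{svyz}: since $pq\notin\omega(A_i)$ for all distinct $p,q\in\Omega$, at most one element $s\in\Omega$ can satisfy $\varphi(s,A_i)\le n_i/2$, so $\varphi(p,A_i)>n_i/2$ for all but one $p\in\Omega$; combined with part~(iii) of that lemma (non-adjacency forces distinct values of $e(\cdot,2^{l_i})$) this gives $|\Omega|-1\le n_i - \lceil n_i/2\rceil + 1$, hence $n_i\ge |\Omega|-1 = n/2-1$. On the other hand $\varphi(p,A_i)\le n_i$ (Lemma~\ref{svyz}(i)) while the primes of $\Omega$ are $r_i(2)$ for $2^{m-1}\le i\le 2^m=n$, so $e(p,2)\le n$ for $p\in\Omega$; if $l_i>1$ then $e(p,2^{l_i})=e(p,2)/\gcd(e(p,2),l_i)$ and one checks (using the density of $\Omega$ in $[n/2,n]$ together with Zsigmondy) that $A_i$ must contain a primitive prime divisor $r_k(2)$ with $k>n$, which contradicts $\pi(A_i)\subseteq\pi(G)=\pi(R)=\pi(L)$ since $R_k(2)\cap\pi(L)=\emptyset$ for $k>n$. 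Thus $l_i=1$, i.e. the field is $\mathbb{F}_2$.

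Next I would fix the type and degree. With $l_i=1$ and $p\in\Omega$ we have $\varphi(p,A_i)=e(p,2)=e(p,\pm 2)$ or $\eta(e(p,2))$ according to the type; since $\Omega=\{r_i(2): n/2\le i\le n\}$ and each of these $e$-values must be $\le n_i$, and since $n_i$ is small (it was shown $n_i\ge n/2-1$ and, by Lemma~\ref{fak}-type reasoning transplanted to even characteristic via Lemma~\ref{qk}'s analogue, $n_i\le n$), the indices $i$ realized as $\varphi$-values of primes in $\Omega$ force $n_i\ge n$, hence $n_i=n$ in the linear case $A_i=L_n(2)$; for unitary, symplectic or orthogonal types the relation between $\prk$ and the largest realizable $e(p,2)$ (via the function $j$ of Lemma~\ref{l:adjanisotrop}) would force $\prk(A_i)$ strictly larger than $n$, again producing a primitive prime $r_k(2)$ with $k>n$ in $\pi(A_i)$, a contradiction. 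Finally, to exclude $L_n(2)$ with $n$ not a power of $2$ or of the wrong size, I would use Lemma~\ref{2spek}: $2^m\in\omega(R)$ but $2^{m+1}\notin\omega(R)$, and by Lemma~\ref{Spectrum}(4)--(6) the largest power of $2$ in $\omega(L_{n_i}(2))$ is governed by the base-$2$ digits of $n_i$; this equals $2^m$ precisely when $n_i=2^m$, which pins $A_i\simeq L_{2^m}(2)$. (Lemma~\ref{Zav} or the parabolic-action Lemmas~\ref{action}, \ref{fact} are available if one needs instead to rule out small perturbations of the degree by exhibiting an element of order $2^{m+1}$ in $\widetilde{G}$.)

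The main obstacle I anticipate is the case analysis over the classical families — especially distinguishing the linear/unitary split and the symplectic/orthogonal families, where $\varphi(r,L)$ is $\eta(e(r,q))$ rather than $e(r,q)$ and the bookkeeping between $\prk$, the index $j$ of Lemma~\ref{l:adjanisotrop}, and the actual set of primitive prime divisors appearing in $\pi(A_i)$ is delicate. In each non-linear case the strategy is the same: the density of $\Omega$ in the dyadic interval $[n/2,n]$ together with the constraint $\varphi(p,A_i)>n_i/2$ for almost all $p\in\Omega$ forces $\prk(A_i)$ to be so large that $A_i$ acquires a primitive prime divisor $r_k(2)$ with $k>n$, contradicting $\pi(A_i)\subseteq\pi(L)$; making the inequalities tight enough to run for all $m>5$ (rather than asymptotically) is the part that will require the most care.
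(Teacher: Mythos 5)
Your strategy is essentially the paper's: both arguments run a case analysis over the classical families in characteristic $2$, bound the rank of $A_i$ from below using the fact that $\Omega=\Delta$ is an independent set contained in $\pi(A_i)$ (Lemmas \ref{coco} and \ref{suma}), and bound it from above by observing that too large a rank or field of definition would force a primitive prime divisor $r_k(2)$ with $k>2^m$ into $\pi(A_i)\subseteq\pi(L)$. The paper packages the lower bound as $t(A_i)\geq 2^{m-1}$ and kills the unitary, symplectic and orthogonal cases by exhibiting such an $r_k(2)$ directly; you reach the same conclusions through Lemma~\ref{svyz}. The closing paragraph about powers of $2$ and Lemma~\ref{2spek} is superfluous once $n_i=2^m$ and $l_i=1$ are in hand.

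There is, however, one step that fails as written. From your own inequality $|\Omega|-1\le n_i-\lceil n_i/2\rceil+1=\lfloor n_i/2\rfloor+1$ the correct conclusion is $n_i\ge 2|\Omega|-4$, i.e.\ $n_i\ge n-2$ up to the off-by-one in $|\Omega|$, not the much weaker $n_i\ge n/2-1$ that you record. With only $n_i\ge n/2-1$ the assertion that $l_i>1$ forces a primitive prime divisor $r_k(2)$ with $k>n$ is false: taking $l_i=2$ and $n_i\approx n/2$ gives $k=l_in_i\approx n$, so a group such as $L_{n/2}(4)$ survives your stated test. It must instead be excluded either by the sharper rank bound $n_i\ge n-2$ (which gives $l_in_i\ge 2(n-2)>n$ for $l_i\ge2$ and $n\ge 64$), or by noting that the odd-index prime $r_{n/2+1}(2)\in\Omega$ does not lie in $\pi(L_{n/2}(4))$, contradicting Lemma~\ref{coco}. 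Once the lower bound is read off correctly the rest of your case analysis goes through and matches the paper's proof.
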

\begin{proof}
It follows from Lemma \ref{Class} that $A_i$ is a classical group over a field of even characteristic.

Assume that $A_i\simeq L_t(2^k)$. Since $r_{tk}\in \pi(L_t(2^k))$ we have $t+k\leq 2^m$. It follows from the fact $t(A_i)\geq2^{m-1}$ that $t=2^m$. Therefore $k=1$.

Assume that $A_i\simeq U_t(2^k)$. It follows from the fact $t(A_i)=2^{m-1}$ that $t\geq2^m$ or $t=2^m-1$. In this cases $\pi(A_i)$ contains $r_{2(2^{m}-1)}(2^k)$; a contradiction.

If $A_i$ is a symplectic or an orthogonal group, then from description of maximal tori follows that $\pi(A_1)$ contains $r_{2t(A_i)+1}(2)$; a contradiction.

\end{proof}

\begin{lem}\label{volna}
$\widetilde{G}=X$.
\end{lem}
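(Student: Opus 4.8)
The plan is to show that $\widetilde{G}=G/H$, which sits inside $\Aut(X)$ with $X=A_1\times A_2\times A_3$ and each $A_i\simeq L_{2^m}(2)$, actually equals $X=\mathrm{Soc}(\widetilde G)$. We already know from Lemma~\ref{suma} that $|\widetilde G/X|$ is not divisible by any prime from $\Omega$, so the index $|\widetilde G:X|$ is built entirely from primes outside $\Omega$. The full automorphism group of $X$ is $\Aut(L_{2^m}(2))\wr S_3$, and since $L_{2^m}(2)=L_{2^m}(2)$ has $\Out = 2$ (generated by the graph automorphism, as the field is prime), every outer part is governed by the graph automorphisms of the three factors together with the permutation action on the three factors. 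So $\widetilde G/X$ embeds in $(C_2)^3\rtimes S_3$, a $\{2,3\}$-group.

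First I would observe that a nontrivial permutation of the three isomorphic factors $A_1,A_2,A_3$, combined with the fact that each $A_i$ contains an element of order $r\in\Omega$, forces $\widetilde G$ (hence $G$) to contain an element of order $3r$ or even $r^2$ or $3r^2$: for instance a $3$-cycle permuting the factors acting on a diagonal element built from an order-$r$ element gives an element of order $3r$, and a transposition swapping two factors while the third carries an independent order-$r'$ element from $\Omega$ gives order $rr'$. But by Lemma~\ref{spectr} any element whose prime support lies in $\Omega$ has at most $3$ prime divisors, and more importantly $3r, rr', r^2\notin\omega(R)$ for $r,r'\in\Omega$ — indeed $\omega(L)$ does not contain $r^2$ for $r\in\Omega$ (these $r_i$ divide the relevant $2^i-1$ to the first power only, by Lemma~\ref{Spectrum}), and $R=L^3$ cannot produce $3r$ with $r\in\Omega$ since $3\notin\pi(2^i-1)$ forces $3r\notin\omega(L)$ and in $R$ one would need $3$ and $r$ in the same factor. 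This rules out both the $S_3$-part acting nontrivially and, similarly, the graph automorphisms.

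The graph automorphism case is handled the same way: a graph automorphism $\tau$ of $A_i\simeq L_{2^m}(2)$ has order $2$, and the extension $\langle A_i,\tau\rangle\simeq L_{2^m}(2).2$. By the known spectrum of $L_n(2).2$ (or directly: the centralizer structure of the graph automorphism, which involves $P\!Sp$ or $O^\pm$ type subgroups), one checks that $\langle A_i,\tau\rangle$ contains an element of order $2r$ for a suitable $r\in\Omega$, or an element $\tau g$ of order $2r$ where $g$ has order $r$; then combined with independent order-$r'$ elements of $A_j, A_k$ ($j,k\neq i$) from $\Omega$ we get an element of order $2rr'$ or $rr'r''$ in $\widetilde G$. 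Again this contradicts Lemma~\ref{spectr} or the explicit form of $\omega(R)$ from Lemma~\ref{Spectrum}: $2$ together with two or three primes of $\Omega$ is not realized, since $2\cdot r$ already needs $r \mid 2^j+1$ type conditions incompatible with $r\in R_i(2)$, $i>2^{m-1}$, and in $R$ one cannot split a single element's support across factors. Hence no graph automorphism survives, so $\widetilde G/X$ is trivial.

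The main obstacle I expect is making the graph-automorphism argument precise: one must actually exhibit an element of order divisible by a prime of $\Omega$ inside the coset $A_i\tau$ (or show that $\langle A_i, \tau\rangle$ has strictly larger spectrum than $A_i$ in a way that interacts with $\Omega$), which requires invoking the structure of $C_{A_i}(\tau)$ — a group of type $Sp_{2^m}(2)$ or $O^{\pm}_{2^m}(2)$ — and checking that it contains a primitive prime divisor $r_k(2)$ with $k$ in the range $[2^{m-1},2^m]$, or better, that the full extension $L_{2^m}(2).2$ acquires an element whose order is $2$ times such a primitive prime. This is a finite, explicit computation using Lemma~\ref{Spectrum} applied to the orthogonal/symplectic subgroup together with the parabolic-action lemmas (Lemmas~\ref{action},~\ref{fact}), but it is the only place where one genuinely leaves the realm of ``follows from the spectrum of $L$'' and must reason about a specific almost simple extension.
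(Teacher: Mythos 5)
Your overall frame (reduce to graph automorphisms of the factors plus the $S_3$ permuting them) matches the paper's, but the contradictions you propose to derive do not exist, and this is a genuine gap rather than a presentational one. You repeatedly claim that orders such as $3r$, $rr'$, $2rr'$ and $rr'r''$ with $r,r',r''\in\Omega$ lie outside $\omega(R)$, asserting that ``in $R$ one cannot split a single element's support across factors.'' This is false: $R=L^3$ is a direct product, so $\omega(R)$ consists of all least common multiples of triples of elements of $\omega(L)$. Taking an element of order $3$ in one factor and an element of order $r$ in another already gives $3r\in\omega(R)$; likewise $rr'$, $2rr'$ and $rr'r''$ all lie in $\omega(R)$ by distributing the primes among the three factors. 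Lemma~\ref{spectr} only forbids \emph{four} pairwise distinct primes of $\Omega$ in one element order, and none of your constructions reaches four. So neither the permutation part nor the graph-automorphism part is actually ruled out by your argument.

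The paper's proof avoids this trap by working with prime \emph{powers}, which cannot be spread across the factors of a direct product: the $2$-part of any member of $\omega(R)$ equals the $2$-part of some member of $\omega(L)$, hence is at most $2^m$ by Lemma~\ref{2spek}. A graph automorphism of $A_i\simeq L_{2^m}(2)$ produces, via Wall's description of conjugacy classes in the relevant extension, an element of order $2^{m+1}$; and an element of $\widetilde G$ permuting the $A_i$ nontrivially produces, by the usual wreath-product computation, an element of order $2^l$ or $3^t$ exceeding the corresponding prime power bound in $\omega(R)$. If you want to repair your proof, you must replace every ``product of distinct primes from $\Omega$'' contradiction with either a four-prime configuration from $\Omega$ or, as the paper does, an excessive power of a single prime.
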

\begin{proof}
Assume that there exists an element $g\in N_{\widetilde{G}}(A_1)$ which acts on $A_1$ as an outer automorphism. Since $Aut(A_1)\simeq A_1\leftthreetimes \lambda$ where $\lambda$ is the graph automorphism of order $2$, we have $g$ acts on $A_i$ as the graph automorphism. It follows from \cite{Wall} that in this case $\omega(\widetilde{G})$ contains the number $2^{m+1}$; a contradiction.

Therefore, if $g\in\widetilde{G}\setminus X$ then without loss of generality we can assume that $A_1^g\neq A_1$. In this case $\omega(\widetilde{G})\setminus \omega(R)$ contains a number $2^l$ or $3^t$; a contradiction.

\end{proof}

\begin{lem}
$X=G$.
\end{lem}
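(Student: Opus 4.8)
The plan is to show that $G=X$, i.e.\ that the normal subgroup $H$ constructed in Lemma~\ref{good} is trivial, given that by the preceding lemmas we already know $\widetilde{G}=G/H=X=A_1\times A_2\times A_3$ with each $A_i\simeq L_{2^m}(2)$. Since $\omega(G)=\omega(R)=\omega(X)$ and $X$ is a quotient of $G$, it remains only to rule out a nontrivial $H$. First I would pass to a minimal counterexample: assume $H\neq 1$, take a chief factor of $G$ inside $H$, say an elementary abelian $p$-group $N$ at the bottom (if $H$ has a nonabelian chief factor we handle that separately, but in fact any nonsolvable composition factor of $H$ would contribute primes and, via Lemma~\ref{Simpleryd} together with the three nonsolvable factors $A_i$, force a forbidden product of element orders as in the proof of Lemma~\ref{good}; so $H$ is solvable and we may assume $N$ is elementary abelian). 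By Lemma~\ref{Zav} it suffices to derive a contradiction in the natural semidirect product $N\rtimes (G/H')$ for a suitable normal subgroup $H'$, so we reduce to $G=N\rtimes X$ with $N$ an irreducible $\mathbb{F}_p X$-module on which $X$ acts faithfully.

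The core of the argument is then a faithful-module / element-order construction, exactly in the spirit of Lemmas~\ref{action} and~\ref{fact}. The characteristic $p$ of $N$ is some prime in $\pi(G)=\pi(R)=\pi(L)$. I would split into two cases according to whether $p=2$ or $p$ is odd. If $p$ is odd: since $|\Delta|=|\Omega|=2^{m-1}>27$ is large, by Lemma~\ref{l:adjanisotrop} for each $A_i$ I can pick a prime $r_i\in\Delta$ (with $r_i\notin\pi(6\cdot 2\cdot 3)$, automatic since $r_i$ is huge) lying in $\omega(P_i)$ for a proper parabolic $P_i$ of $A_i$, and with $e(r_i,2)$ distinct. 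Then $X$ contains a subgroup $P_1\times P_2\times P_3$ with an element $g$ of order $r_1r_2r_3$, and $g$ is a $p'$-element; applying Lemma~\ref{action} (coprimality $(r_i,6p(q+1))=1$ holds since $q=2$, $q+1=3$, and $r_i$ is coprime to $6p$) gives an element of order $p\cdot r_1r_2r_3$ in $N\rtimes X\le G$, hence a product of four primes from $\Omega$; this contradicts Lemma~\ref{spectr}. If $p=2$: then $2r_1r_2r_3$ of this shape would likewise violate Lemma~\ref{spectr} unless the minimal-polynomial obstruction in Lemma~\ref{fact} intervenes, but Lemma~\ref{fact}(iii) would force each relevant prime to be a Fermat prime $2^{2^\delta}+1$, which the primes in $\Omega$ are not (they are primitive prime divisors $r_i(2)$ with $2^{m-1}\le i\le 2^m$, so they cannot all be Fermat primes), and in any case one can instead use the $2$-part bound: inside $N\rtimes X$ one produces an element of order $2^{m+1}$ or a product contradicting Lemma~\ref{2spek} or~\ref{spectr}. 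Either way $H=1$.

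Having shown $H=1$, we get $G=\widetilde{G}=X=A_1\times A_2\times A_3$ with each $A_i\simeq L_{2^m}(2)$, i.e.\ $G\simeq R$, completing the proof of the Theorem.

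The main obstacle I expect is the case $p=2$ in the faithful-module step: when the module characteristic coincides with the defining characteristic of the $A_i$, Lemma~\ref{action} does not apply directly (its hypothesis requires $t\neq p$), so one must argue more carefully, either by exploiting that the three parabolic elements live in three different direct factors (so the relevant module decomposes and one can still find a fixed vector producing an element of order $2r_1r_2r_3$), or by pushing the $2$-part of some element order above $2^m$ to contradict Lemma~\ref{2spek}. Making this $p=2$ subcase airtight — in particular checking that $N$, as an $\mathbb{F}_2[A_1\times A_2\times A_3]$-module, really does admit the desired element order and that the Fermat-prime escape hatch of Lemma~\ref{fact}(iii) is genuinely closed for primes in $\Omega$ — is the delicate part; the odd-characteristic case is essentially a direct citation of Lemmas~\ref{action} and~\ref{l:adjanisotrop}.
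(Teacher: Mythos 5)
There is a genuine gap, and it sits at the heart of your argument. In the faithful-module case with $p$ odd you produce an element of order $p\,r_1r_2r_3$ and declare this ``a product of four primes from $\Omega$'', contradicting Lemma~\ref{spectr}. But by Lemma~\ref{suma} we have $\Delta=\Omega$ and $\pi(H)\cap\Omega=\varnothing$, so the characteristic $p$ of the module is \emph{not} in $\Omega$; Lemma~\ref{spectr} only restricts elements $g$ with $\pi(g)\subseteq\Omega$ and says nothing about $p\,r_1r_2r_3$. Worse, such an order can genuinely lie in $\omega(R)$: one places $p$ together with one $r_i$ in a single coordinate of $L^3$ whenever $p$ is adjacent to that $r_i$ in $GK(L)$, which happens for many $p\notin\Omega$ (e.g.\ $p=3$ is adjacent to every $r_j(2)$ with $j$ even, by item 2 of Lemma~\ref{Spectrum}). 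So the contradiction you claim simply is not there, and choosing the $r_i$ to avoid adjacency with $p$ is an extra argument you neither make nor can always combine with the parabolic condition without care. The paper avoids this entirely by switching to a \emph{prime-power} obstruction: it takes the largest $p$-power $p^l\in\omega(A_1)$, locates a parabolic subgroup $P\leq A_1$ with a section $NM$ ($N$ a normal $2$-group, $M$ cyclic of order $p^l$), and applies Lemma~\ref{action} to get $p^{l+1}\in\omega(G)\setminus\omega(R)$; for $p=2$ it uses the Frobenius subgroup of Lemma~\ref{Frobenius} (kernel of odd order $2^{2^m}-1$, complement of order $2^m$) acting on the $2$-module to force $2^{m+1}\in\omega(G)$, against Lemma~\ref{2spek}. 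Your own $p=2$ discussion correctly senses the difficulty but ends in a vague ``either way''; the Frobenius argument is the missing ingredient.

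A second omission: you reduce at once to a module on which $X$ acts faithfully, but the case where $X$ acts trivially on the chief factor (a central extension) must be treated separately, and the paper spends half of its proof on it. There one uses Lemma~\ref{Zav} to pass to $H\times X$ and must exhibit an element order $\gamma\in\omega(R)$ that is maximal under divisibility and coprime to $p$, so that $p\gamma\in\omega(H\times X)\setminus\omega(R)$; this is the purpose of the explicit partitions ($k_1+\dots+k_l=2^m-1$ and the sets $\Upsilon,\Phi,\Psi$) in the paper. Finally, your justification that $H$ is solvable via Lemma~\ref{Simpleryd} does not work as stated, again because $\pi(H)\cap\Omega=\varnothing$ means a nonsolvable composition factor of $H$ contributes no primes from $\Omega$ and Lemma~\ref{Simpleryd} is silent about it.
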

\begin{proof}
It follows from Lemma \ref{volna} that $G=H.X$.
Assume that $H$ is a nontrivial group. Without loss generation we can think that $H$ is an abelian $p$-group. It follows from Lemma \ref{suma} that $p\not\in\Omega$.

Suppose that $H.X$ is a central extension of $H$ by $X$. Lemma \ref{Zav} implies that it suffices to prove that $\omega(H\times X)\neq\omega(R)$. To do this, we need to find the maximal by divisibility number $\gamma \in \omega(X)$ that is not divisible by $p$. Assume $p=2$. There is a decomposition of the number $2^m-1$ into the sum of odd numbers $k_1+k_2+...+k_l$ such that $k_i$ does not divide $k_j$ for $ i, j \in \{1..l \}, i \neq j$. Let $g_1\in A_1$ be an element of order $(2^{k_1}-1)(2^{k_2}-1)...(2^{k_l}-1)$, $g_2\in A_2$ an element of order $2^{n}-1$, $g_3\in A_3$ an element of order $2^{n-1}-1$. Obviously, $|g_1g_2g_3|$ is the maximal by divisibility number in $\omega(R)$ and not divisible by $2$. Hence $p\neq 2$. Let $e(p, 2)=t$. There are sets $\Upsilon$, $\Phi$, $\Psi$ of positive integers such that $a_i$ does not divide $a_j$, where $a_i, a_j \in \Upsilon \cup \Phi \cup \Psi$ are distinct numbers and $(\Upsilon \cup \Phi \cup \Psi)\cap \{t,1\}=\varnothing$, $\sum \Upsilon=\sum \Phi=2^m, \sum \Psi = 2^m-1 $. The number $\prod_{i \in \Upsilon \cup \Phi \cup \Psi}2^i-1$ is maximum by divisibility in $\omega(R)$ and is not divisible by $p$.

Thus, we can assume that $A_1$ acts faithfully and irreducibly on $H$. Suppose $p=2$. Lemma \ref{Frobenius} implies that $A_1$ contains a Frobenius group with kernel of order $2^{2^m}-1$ and complement of order $2^m$.
It follows from Lemma \ref{action} that $\omega(G)$ contains the number $2^{m+1}$; a contradiction with Lemma \ref{2spek}. Therefore $p\neq 2$. Let $p^l\in\omega(A_1)$ be such $p^{l+1}\not\in \omega(A_1)$. Since $p\not\in R_{2^m}(2)$ it is easy to show that there exists $t<2^m$ such that $p^l| 2^t-1$. The group $A_1$ include a parabolic subgroup $P$ such that $p^l\in\omega(P)$. In particular $A_1$ includes a group $NM$ where $N$ is a normal $2$-subgroup of $NM$ and $M$ is a cyclic group of order $p^l$. It follows from Lemma \ref{action} that $p^{l+1}\in\omega(G)$; a contradiction.
\end{proof}




\end{document}